\theoremstyle{definition}
\newtheorem{ex}{Example}[section]
\newtheorem*{ex*}{Example}
\newtheorem*{defn}{Definition}
\newtheorem*{DMM*}{Dynamical Manin-Mumford Problem}
\newtheorem*{question*}{Question}
\theoremstyle{plain}
\newtheorem{thm}{Theorem}[section]
\newtheorem{prop}[thm]{Proposition}
\newtheorem*{cor*}{Corollary}
\newtheorem{cor}[thm]{Corollary}
\newtheorem{lem}[thm]{Lemma}
\newtheorem*{claim}{Claim}
\newtheorem*{thm*}{Theorem}
\def\namedlabel#1#2{\begingroup
   \def\@currentlabel{#2}%
   \label{#1}\endgroup
}
\theoremstyle{remark}
\numberwithin{equation}{section}
\DeclareMathOperator{\ch}{ch}
\DeclareMathOperator{\codim}{codim}
\DeclareMathOperator{\ind}{ind}
\def\C{\mathbb{C}}
\def\P{\mathbb{P}}
\def\Z{\mathbb{Z}}
\def\P{\mathbb{P}}
\def\N{\mathbb{N}}
\def\O{{\mathcal{O}}}
\def\U{\mathcal{U}}
\def\SS{\mathbf{S}}
\def\v{\mathbf{v}}
\def\cL{\mathcal{L}}
\def\lbr{\llbracket}
\def\rbr{\rrbracket}
\def\fan{\Delta}
\def\fank{\fan_{\cX_k}}
\def\n0{{\bf n_0}}
\def\bx{{\bf x}}
\def\transp #1{\vphantom{#1}^{\mathrm t}\! {#1}}
\def\cV{\mathcal{V}}
\def\cU{\mathcal{U}}
\def\cX{\mathcal{X}}
\def\cY{\mathcal{Y}}
\def\cC{\mathcal{C}}
\def\cL{\mathcal{L}}
\def\Hess{{\mathcal Hess}}
\def\Flag{{\mathcal Flag}}
\def\Ind{{\mathcal Ind}}
\def\fS{\mathfrak{S}}
\def\ba{{\mathbf a}}
\def\Code{{\mathcal Code}}
\def\l{Z}
\def\bz{{\bf{z}}}
\def\Vflag{V_\bullet}
\def\lr #1{\langle #1\rangle}
\def\origin{\mathbf{0}}
\begin{document}

\title[Hessenberg Varieties Related To The Permutohedral Variety]
{The Geometry and Combinatorics of Some Hessenberg Varieties Related to the Permutohedral Variety}

\author{Jan-Li Lin}
\address{Department of Mathematics and Statistics, Washington University in St. Louis, One Brookings
	Drive, St. Louis, Missouri 63130, U.S.A.}
\email{jan-li@wustl.edu}

\subjclass{}

\keywords{}

\begin{abstract}
We construct a concrete isomorphism from the permutohedral variety to the regular semisimple Hessenberg variety associated to the Hessenberg function $h_+(i)=i+1$, $1\le i\le n-1$. 
In the process of defining the isomorphism, we introduce a sequence of varieties which we call the prepermutohedral varieties. We first determine the toric structure of these varieties and compute the Euler characteristics and the Betti numbers using the theory of toric varieties.
Then, we describe the cohomology of these varieties. We also find a natural way to encode the one-dimensional components of the cohomology using the codes defined by Stembridge~\cite{Ste}.  
Applying the isomorphisms we constructed, we are also able to describe the geometric structure of regular semisimple Hessenberg varieties associated to the Hessenberg function represented by $h_k= (2,3, \cdots, k+1, n,\cdots,n)$, $1\le k\le n-3$. In particular, we are able to write down the cohomology ring of the variety. Finally, we determine the dot representation of the permutation group $\fS_n$ on these varieties. 
\end{abstract}

\maketitle


\section{Introduction}

There are several different ways to describe the permutohedral variety $\cX$ of dimension $n-1$, for instances: 
\begin{enumerate}
	\item It is the toric variety associated to the normal fan of the permutohedron.
	\item It is the graph of the Cremona involution
	\begin{align*}
		J:\P^{n-1}& \dashrightarrow \P^{n-1}\\
	          [z_1 : \cdots : z_{n}]    & \longmapsto  	[z_1^{-1} : \cdots : z_{n}^{-1}] 
	\end{align*}
    \item It is an iterated blowup of $\P^{n-1}$ at all (strict transforms of) coordinate linear subspaces in a certain order, as follows. 
\[
\xymatrix{
	\cX_{n-2}\ar[r]^{\pi_{n-2}} &\cX_{n-3} \ar[r]^{\pi_{n-3}}& \cdots \ar[r]^{\pi_{2}}&
	\cX_{1} \ar[r]^{\pi_{1}\ \ \ \ \ } & \cX_0 = \P^{n-1}
}
\]
Each $\cX_{k+1}$ is the blowup of $\cX_k$ at all the strict transform of the $k$-dimensional coordinate spaces ($0\le k\le n-3$). Further details are in Section~\ref{section:blowups}.
    \item It is the regular semisimple Hessenberg variety $\Hess(\SS,h_+)$ associated to the Hessenberg function $h_+$ defined by $h_+(i)=i+1$ for $1\le i\le n-1$ and $h_+(n)=n$.
\end{enumerate} 

The isomorphisms among the first three descriptions are well-known. On the other hand, to the author's knowledge, the isomorphism between the permutohedral variety and the Hessenberg variety is proved by general theory of toric varieties~\cite[Lemma 10 and Theorem 11]{DMPS}. The first goal of this article is to construct a concrete isomorphism from the iterated blowups on $\P^{n-1}$ to the Hessenberg variety. In the process, we further obtain isomorphisms from $\cX_k$, the $k$-th step in the iterated blowups, to a ``Hessenberg type'' subvariety of a partial flag variety which is denoted by $\Hess^{(k+1)}(\SS,h_+)$ (for details, see Section~3). 

Next, in Section~4, we apply the theory of toric varieties to explore certain geometric properties of the varieties $\cX_k$. In particular, we show that the Euler characteristic of $\cX_k$ is equal to the permutation number $P( n, k+1) = \frac{n!}{(n-k-1)!}$. In fact, there is a basis for the homology $H_*(\cX_k)$ whose elements are in one-to-one correspondence with permutations of $(k+1)$ different numbers chosen from $[n]:=\{1,\cdots,n\}$. As a consequence, the question of finding Betti numbers of $\cX_k$ turns into a counting question of permutations with certain property. More precisely, if we make the following definition: 
\begin{defn}	 
	For a permutation $a_1,\cdots, a_{k+1}$ of $k+1$ different numbers in $[n]$, set $\alpha_0=[n]\setminus \{a_1,\cdots, a_{k+1}\}$, a {\em descent} (resp. {\em ascent}) for the permutation $a_1,\cdots,a_{k+1}$ is either $a_j > a_{j+1}$ (resp. $a_j < a_{j+1}$) for some $j=1,\cdots, k$, or $a > a_{1}$ (resp. $a < a_{1}$) for some $a\in\alpha_0$.
\end{defn} 

Then we can calculate the even Betti numbers of $\cX_k$ as follows (note that the odd Betti numbers of $\cX_{k}$ are all $0$.)

\begin{prop} [Proposition~\ref{prop:Betti}]
The $2i$-th Betti number of $\cX_{k}$ is given by
	\begin{align*}
		\beta_{2i}(\cX_{k})&= \# (\text{permutations of $k+1$ different numbers in $[n]$ with $i$ descents})\\
		&= \# (\text{permutations of $k+1$ different numbers in $[n]$ with $n-1-i$ ascents}).
	\end{align*}
\end{prop}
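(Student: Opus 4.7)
The plan is to compute the Betti numbers via a Białynicki–Birula decomposition adapted to the toric structure of $\cX_k$ established in Section~4. Since $\cX_k$ is a smooth projective toric variety, odd cohomology vanishes, and a generic one-parameter subgroup $\lambda:\C^*\to T$ induces a cell decomposition
\[
\cX_k \;=\; \bigsqcup_{x\in \cX_k^T} C_x, \qquad C_x=\{y:\lim_{t\to 0}\lambda(t)\cdot y=x\},
\]
where each $C_x$ is an affine cell whose dimension equals the number of $T$-weights on $T_x \cX_k$ on which $\lambda$ acts with positive weight. Consequently $\beta_{2i}(\cX_k)=\#\{x\in\cX_k^T:\dim C_x = i\}$. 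Because only even Betti numbers are nonzero, it suffices to set up this count and match it to descents.

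First I would invoke the toric description of $\cX_k$ from Section~4 to identify $\cX_k^T$ with the set of maximal cones of its fan $\fank$, and in turn with permutations $(a_1,\ldots,a_{k+1})$ of $k+1$ distinct elements of $[n]$. Concretely, tracing through the iterated blow-up construction $\pi_1,\ldots,\pi_k$, a $T$-fixed point is specified by recording, at each stage, which coordinate stratum it sits above; the output of this bookkeeping is exactly such a partial permutation, with the trailing unused indices forming $\alpha_0$. This is precisely the statement that $\chi(\cX_k) = P(n,k+1)$, so the bijection already arises from the fixed-point count used to obtain the Euler characteristic.

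Next, for each such fixed point $x_{(a_1,\ldots,a_{k+1})}$ I would explicitly read off the $T$-weights on $T_x\cX_k$ from the rays of the corresponding maximal cone. Since blow-ups replace a weight corresponding to a coordinate stratum by weights encoding the normal directions of the exceptional divisor, the weights at $x$ split into two groups: $k$ weights associated with the consecutive pairs $(a_j,a_{j+1})$ arising from the blow-ups, and $|\alpha_0|=n-k-1$ weights associated with the remaining coordinate directions involving $a_1$ and elements $a\in\alpha_0$. Choosing $\lambda$ so that $\lambda(t)$ has weight $t\mapsto t^{M_i}$ on the $i$-th coordinate with $M_1\gg M_2\gg\cdots\gg M_n$ (a standard choice making the pairing with each root just a comparison of indices), a weight is positive under $\lambda$ precisely when it corresponds to an inequality of the form $a_j>a_{j+1}$ or $a>a_1$ for $a\in\alpha_0$; that is, exactly a descent in the sense of the definition.

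The upshot is $\dim C_{x_{(a_1,\ldots,a_{k+1})}} = \#\text{descents}$, giving the stated formula; the ascent version follows because there are $n-1$ comparisons in total. The main obstacle will be the middle step: carefully tracking how the weights at each $T$-fixed point transform under the successive blow-ups $\pi_j$, so that the final weight vocabulary at $x$ lines up cleanly with the pairs $(a_j,a_{j+1})$ and with the pairs $(a,a_1)$ for $a\in\alpha_0$. This is a finite but delicate calculation that will rely on the explicit description of the fan of $\cX_k$ from Section~4 and on standard formulas for the fan change under a smooth blow-up along a torus-invariant stratum.
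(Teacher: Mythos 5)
Your proposal is correct in outline and lands on the same combinatorial count, but it runs through a different technical device than the paper. The paper stays entirely inside Fulton's combinatorial criterion: it puts an explicit total order (reverse lexicographic on the list notation for chains) on the maximal cones of $\fank$, verifies condition $(*)$ of \cite[Section 5.2]{Fulton}, and computes the cones $\tau_\cC$ directly --- each descent in $\cC$ removes one generator from $\sigma_\cC$, so $\dim_\C V[\tau_\cC]$ equals the number of descents and the classes $[V(\tau_\cC)]$ form a basis of $H_*(\cX_k)$. You instead invoke the Bia\l{}ynicki-Birula decomposition and count positive tangent weights of a generic one-parameter subgroup at each fixed point. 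These are two faces of the same geometry (Fulton's criterion is itself established by such a filtration), but the labor lands in different places: the paper must check the ordering condition and describe $\tau_\cC$ explicitly, whereas you must compute the $T$-weights on $T_x\cX_k$ at every fixed point, the step you rightly flag as delicate. That computation does come out as you predict: reading the weights off the dual basis to the ray generators $\{e_i\}_{i\in\alpha_0}\cup\{e_{\alpha_j}\}$ of a maximal cone, the $n-1$ weights are exactly $\chi_{a_{j+1}}-\chi_{a_j}$ for consecutive pairs and $\chi_{a_1}-\chi_a$ for $a\in\alpha_0$, and with $M_1>\cdots>M_n$ the positive ones are precisely the descents. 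One caveat: fix the sign conventions (direction of the limit versus sign of the tangent weights) consistently, since reversing them yields the ascent count instead; the final statement would survive via the symmetry $a\mapsto n+1-a$ (equivalently Poincar\'e duality), but the intermediate identification would be off. Your approach buys a conceptually cleaner reason why descents appear (they are literally the positive weight directions), at the cost of a blow-up-by-blow-up weight calculation; the paper's approach buys an explicit homology basis of orbit closures $V[\tau_\cC]$, which it reuses later.
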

These Betti numbers are quite natural generalization of the Eulerian numbers. In fact, for $k=n-2$, $\cX_{n-2}$ is the permutohedral variety and it is well known that $\beta_{2i}(\cX_{n-2})=A(n,i+1)$ are the Eulerian numbers. However, the author's cannot find information about them in the literature. It might be interesting to study further properties of these numbers.

In Section 5, we first describe $H^*(\cX_k)$ using the blowup structure (Proposition~\ref{thm:cohomology_preperm}).
There is a system of codes defined by Stembridge in \cite{Ste}. He also proved in the article that the representation of the symmetric group $\fS_n$ on $H^*(\cX)$ is isomorphic to the permutation representation induced by the action on codes \cite[Proposition 4.1]{Ste}. In Proposition~\ref{prop:encoding_cohomology}, we realize the isomorphism by defining a natural one-to-one correspondence between codes of length $n$ and one-dimensional components of $H^*(\cX^{n-1})$. The correspondence can also be restricted to $H^*(\cX^{n-1}_k)$, and is compatible with the $\fS_n$ action. This immediately gives us a way to concretely construct permutation basis for the $\fS_n$ representation on each $H^*(\cX^{n-1}_k)$, $0\le k\le n-2$.

The question of finding a permutation basis for the dot action on the cohomology of Hessenberg varieties has drawn people's attention recently \cite{AHM, CHL, Chow, HPT}  because of it's relation with the Stanley-Stembridge conjecture. The relation was observed by Shareshian and Wahcs~\cite{SW}. They also announced an important conjecture that was later proved by Brosnan and Chow~\cite{BC}, and independently by Guay-Paquet~\cite{GP}. In the case of the permutohedral variety, the Stanley-Stembridge conjecture is known to be true, and a permutation basis is also known. The known basis was first conjectured by Chow \cite{Chow} and then proved by Cho, Hong, and Lee \cite{CHL}. It is based on the theory on equivariant cohomology, then pass it on to the usual cohomology. The basis constructed in this paper is based on the geometric structure of $\Hess^{(k)}(\SS,h_+)$ and the combinatorics of the codes, and is for the usual cohomology. It might be interesting to compare these two kinds of basis.

In Section 6, we use the isomorphisms at each step of the blowups to investigate semisimple Hessenberg varieties $\Hess(\SS,h_k)$ associated to the Hessenberg function 
$h_k=(2,3,\cdots,k+1,n,\cdots, n)$, $1\le k\le n-3$.
We observe that $\Hess(\SS,h_k)$ has a fiber bundle structure over $\cX_k$ with fibers isomorphic to the flag variety $\Flag(\C^{n-k-1})$. We use this structure to obtain a description of the cohomology of $\Hess(\SS,h_k)$. 

\begin{prop} [Proposition \ref{prop:cohomology_Y}]
	The cohomology ring $H^*(\Hess(\SS,h_k))$ is generated over $H^*(\cX_{k})$ as
	\[
	H^*(\Hess(\SS,h_k))\cong H^*(\cX_{k})[X_{k+2},\cdots,X_n]/
	(e_1(X_{k+2},\cdots,X_n),\cdots,e_{n-k-1}(X_{k+2},\cdots,X_n)),
	\]
where $e_j$ is the $j$-th elementary symmetric polynomials.
In addition, for the images $x_i$'s of the $X_i$'s, the classes $x_{k+2}^{i_{k+2}}\cdots x_n^{i_n}$, with exponents $0\le i_{j}\le n-j$, form a basis for $H^*(\cY)$ over $H^*(\cX_{k})$. 
\end{prop}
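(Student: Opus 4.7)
The plan is to apply the Leray--Hirsch theorem to the flag bundle structure $\Flag(\C^{n-k-1}) \hookrightarrow \Hess(\SS,h_k) \xrightarrow{\pi} \cX_k$ noted just before the statement. Since $h_k(i)=n$ for all $i\ge k+1$, the Hessenberg conditions $\SS V_i\subset V_{i+1}$ are nontrivial only for $1\le i\le k$; they constrain only the partial flag $V_1\subset\cdots\subset V_{k+1}$, while the subspaces $V_{k+2},\ldots,V_{n-1}$ can be chosen freely between $V_{k+1}$ and $\C^n$. Consequently $\pi$ has fiber $\Flag(\C^n/V_{k+1})\cong\Flag(\C^{n-k-1})$, and $\Hess(\SS,h_k)$ is identified with the flag bundle of the rank-$(n-k-1)$ quotient bundle $Q=\underline{\C^n}/V_{k+1}$ over $\cX_k$.

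As global lifts of the fiber generators, take $X_i=c_1(V_i/V_{i-1})$ for $k+2\le i\le n$, restricted from the ambient full flag variety $\Flag(\C^n)$ to $\Hess(\SS,h_k)$. On each fiber these restrict to the Chern classes of the tautological line bundles on $\Flag(\C^{n-k-1})$, for which the monomials $x_{k+2}^{i_{k+2}}\cdots x_n^{i_n}$ with $0\le i_j\le n-j$ form the standard polynomial (Schubert-type) basis. The Leray--Hirsch theorem then delivers the claimed $H^*(\cX_k)$-free basis of $H^*(\Hess(\SS,h_k))$, establishing the module structure and basis assertion at once.

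For the ring presentation, Grothendieck's formula for a flag bundle yields relations $e_j(X_{k+2},\ldots,X_n)=c_j(Q)$ in $H^*(\Hess(\SS,h_k))$ for $j=1,\ldots,n-k-1$. The claim therefore reduces to arranging $c_j(Q)=0$ in $H^*(\cX_k)$ for each such $j$, or equivalently $c(V_{k+1})\equiv 1$ modulo degrees exceeding $2(n-k-1)$. Modifying the $X_i$'s by suitable pullbacks from $H^*(\cX_k)$---a substitution that does not affect the Leray--Hirsch basis---one rewrites the relations in the simplified form $e_j(X_{k+2},\ldots,X_n)=0$.

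The main obstacle is identifying the correct representatives so that the relations take this clean form; this is where the iterated blowup description of $\cX_k$ from Section~\ref{section:blowups} and the explicit cohomology of $\cX_k$ given in Proposition~\ref{thm:cohomology_preperm} enter the argument, and where the combinatorics of the Chern classes of the tautological subbundle $V_{k+1}$ is pinned down. Once this simplification is in place, the ring presentation follows immediately by combining the Leray--Hirsch basis with Grothendieck's flag-bundle formula.
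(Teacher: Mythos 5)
The Leray--Hirsch half of your argument is correct and is essentially the route the paper takes: the paper realizes $\cY=\Hess(\SS,h_k)$ as an iterated projective bundle over $\cX_k$ (which is exactly how one proves Leray--Hirsch for a flag bundle), and this yields both the generation of $H^*(\cY)$ over $H^*(\cX_k)$ by the classes $x_{k+2},\dots,x_n$ and the freeness with basis $x_{k+2}^{i_{k+2}}\cdots x_n^{i_n}$, $0\le i_j\le n-j$. Up to the sign convention on $c_1$ (the paper sets $x_j=-c_1(\cL_j)$), your generators are the paper's, so the module structure and basis assertion are in order.

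The genuine gap is in the ring presentation, and you have located the obstruction precisely without removing it. Grothendieck's formula for the flag bundle of $Q=\C^n/\cV_{k+1}$ gives $e_j(X_{k+2},\dots,X_n)=\pm c_j(Q)$ with $c(Q)=c(\cV_{k+1})^{-1}$, and the tautological bundle $\cV_{k+1}$ on $\cX_k$ does \emph{not} have trivial Chern classes: one finds $e_1(x_{k+2},\dots,x_n)=c_1(\cV_{k+1})$, and already for $k=1$ the class $c_1(\cV_2)=-2\pi_1^*\xi+(\text{an effective combination of the }E_i)$ is nonzero on the blowup of $\P^{n-1}$ at the coordinate points. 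So the clean relations $e_j=0$ fail for the tautological generators, some modification is genuinely required, and the mechanism you propose --- shifting each $X_i$ by a pullback class --- is not valid in general: even if $c(Q)=\prod_i(1+\gamma_i)$ with $\gamma_i\in H^2(\cX_k)$, the substitution $X_i\mapsto X_i+\gamma_i$ does not carry the ideal $(e_j(X))_j$ onto $(e_j(X)-e_j(\gamma))_j$ unless the $\gamma_i$ coincide; for rank two one has $e_2(X+\gamma)-e_2(\gamma)\equiv X_2(\gamma_1-\gamma_2)$ modulo $(e_1(X),e_2(X))$, which is nonzero when $\gamma_1\ne\gamma_2$. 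What your argument still owes is a proof that $H^*(\cX_k)[X]/\bigl(e_j(X)\mp c_j(Q)\bigr)$ is isomorphic to $H^*(\cX_k)[X]/\bigl(e_j(X)\bigr)$ as an $H^*(\cX_k)$-algebra; that is the entire content of the presentation beyond Leray--Hirsch, and it is exactly the step you defer as ``the main obstacle.'' For comparison, the paper's own justification --- that $e_j(x_{k+2},\dots,x_n)$ vanishes because it is a Chern class of the trivial bundle $\cV_n$ --- suppresses the same $c(\cV_{k+1})$ contribution, so you have at least made the missing input explicit; but as written your proposal does not establish the stated relations.
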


Finally, we determine the dot representation on the cohomologies of $\Hess(\SS,h_k)$.

 \begin{prop} [Proposition \ref{prop:dot_on_Y}]
 	The dot representation on $\Hess(\SS,h_k)$ is isomorphic to the representation on 
 	\[
 	H^*(\cX_{k})[X_{k+2},\cdots,X_n]/
 	(e_1(X_{k+2},\cdots,X_n),\cdots,e_{n-k}(X_{k+2},\cdots,X_n))
 	\]
 	which acts on  $H^*(\cX_{k})$ as described in Section \ref{sec:dot_on_preperm}, and acts trivially on the $H^*(\cX_{k})$-basis $x_{k+2}^{i_{k+2}}\cdots x_n^{i_n}$, $0\le i_{j}\le n-j$.
 \end{prop}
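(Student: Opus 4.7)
My plan is to combine the Leray--Hirsch decomposition from Proposition~\ref{prop:cohomology_Y} with a GKM computation showing that each fibre-class generator $x_j$, $j>k+1$, is $\fS_n$-invariant under the dot action.

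First, I would recall that by Proposition~\ref{prop:cohomology_Y}, $H^*(\Hess(\SS,h_k))$ is free over $H^*(\cX_k)$ with basis $\{x_{k+2}^{i_{k+2}}\cdots x_n^{i_n}:0\le i_j\le n-j\}$. Each $x_j$ is the first Chern class of a tautological quotient line bundle on the flag-variety fibre, and so admits a direct GKM description: at the torus-fixed point $w\in\fS_n$ of $\Hess(\SS,h_k)$, the restriction of $x_j$ equals the character $t_{w(j)}$. The fixed-point projection to $\cX_k$ sends $w\mapsto (w(1),\dots,w(k+1))$, and the edges of the moment graph for $h_k$ split into two groups: edges within a single fibre (all transpositions $s_{ij}$ with $k+1\le i<j$ are allowed) which realize the full moment graph of $\Flag(\C^{n-k-1})$, and edges in the base direction coming from the moment graph of $\cX_k\simeq\Hess^{(k+1)}(\SS,h_+)$.

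Next, I would run the Tymoczko action on $x_j$. With the convention that $\sigma\in\fS_n$ sends $(f_w)_w$ to the tuple whose $w$-entry is $\sigma(f_{\sigma^{-1}w})$, where $\sigma$ acts on characters by $t_i\mapsto t_{\sigma(i)}$, applying this to $x_j$ gives at $w$ the character $\sigma(t_{(\sigma^{-1}w)(j)})=t_{w(j)}$, which is again $x_j$. Hence each $x_j$, and therefore every monomial $x_{k+2}^{i_{k+2}}\cdots x_n^{i_n}$, is pointwise fixed. It follows that $\fS_n$ acts on $H^*(\Hess(\SS,h_k))$ only through the coefficients in $H^*(\cX_k)$, and by restricting the same GKM picture to the base moment subgraph this matches the dot representation on $H^*(\cX_k)$ described in Section~\ref{sec:dot_on_preperm}.

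Finally, I would note that $e_{n-k}(X_{k+2},\dots,X_n)$ vanishes identically in the $n-k-1$ variables $X_{k+2},\dots,X_n$, so the quotient ring appearing in the statement coincides with that of Proposition~\ref{prop:cohomology_Y} and inherits the action just computed. The main obstacle is the equivariance step: because the dot action lives only on cohomology and is not pulled back from any geometric $\fS_n$-action on the Hessenberg variety, the $H^*(\cX_k)$-linearity of the $\fS_n$-action has to be checked at the moment-graph level, and one must carefully match sign and index conventions so that the fibre-direction edges and the base edges act compatibly on the Chern-class generators.
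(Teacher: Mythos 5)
Your argument is correct in outline, but it takes a genuinely different route from the paper's. The paper never touches equivariant cohomology or the moment graph: it proves only an isomorphism of representations by comparing graded Frobenius characteristics. Concretely, it observes that if $\fS_n$ acted trivially on the monomial basis then the characteristic series of the right-hand side would be $[n-k-1]_t!\,A_{n-1,k}(t)$, and it matches this against $\omega X_{L_{n-k,k}}(\bx,t)$ --- which equals $\sum_j \ch H^{2j}(\cY)t^j$ by the Shareshian--Wachs conjecture proved by Brosnan--Chow and Guay-Paquet \cite{SW,BC,GP} --- using the lollipop recursion of Huh--Nam--Yoo \cite{HNY}. The author explicitly states that the pointwise triviality of the dot action on the classes $x_j$ is something they could \emph{not} prove; your GKM computation $(\sigma\cdot x_j)|_w=\sigma\bigl(t_{(\sigma^{-1}w)(j)}\bigr)=t_{w(j)}$ is the standard way to establish that stronger statement (it is in the spirit of \cite{AHM,Tymo1,Tymo2}), and if completed it supersedes the paper's argument rather than merely reproducing it. What still carries the real weight, and what you rightly call the obstacle, are the compatibilities you assert without proof: (i) equivariant formality and the GKM presentation of $\Hess(\SS,h_k)$, needed to descend the invariance of the equivariant Chern classes from $H^*_T$ to $H^*$; and (ii) the statement that $f^*:H^*(\cX_k)\to H^*(\cY)$ intertwines the combinatorial (Tymoczko) dot action on $H^*(\cY)$ with the geometric $\fS_n$-action on the prepermutohedral variety used in Section~\ref{sec:dot_on_preperm} --- this requires checking that the fixed-point map $w\mapsto(w(1),\dots,w(k+1))$ intertwines the actions and that, on $\cX_k\cong\Hess^{(k+1)}(\SS,h_+)$, the honest geometric action induces Tymoczko's formula on restrictions to fixed points. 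Neither step is false, but both must be written out for the proof to stand; once they are, the $H^*(\cX_k)$-linearity follows because the dot action is by ring automorphisms and the $x_j$ are fixed. Your closing remark that $e_{n-k}$ of the $n-k-1$ variables $X_{k+2},\dots,X_n$ vanishes identically, so the quotient ring in the statement agrees with the one in Proposition~\ref{prop:cohomology_Y}, is correct and worth recording.
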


Our proof is based on the characteristic series of the representation, and the conclusion is up to an isomorphism. It would be interesting to compute the $\fS_n$ action on the basis elements; presumably first on the equivariant cohomology then pass to the usual cohomology, as was done in~\cite{Tymo2}.


\section{The permutohedral variety as iterated blowups of $\P^{n-1}$}
\label{section:blowups}

One can obtain the permutohedral variety by performing a sequence of blowups on $\P^{n-1}$, as follows.

\begin{enumerate}
	\item First, we blowup the $n$ points $\l_1 = [1:0:\cdots:0]$, $\l_2 = [0:1:0\cdots:0]$, $\cdots$, $\l_n = [0:\cdots:0:n]$. 
	We denote the resulting variety and the projection map by $\pi_1:\cX_1\to\P^{n-1}:=\cX_0$. We also have the exceptional divisors $E_i=\pi_1^{-1}(\l_i)\subset\cX_1$.
	
	\item Next, we blowup the strict transforms (in $\cX_1$) of all the lines $\l_{\{i,j\}}\subset \P^{n-1}$ connecting $\l_i$ and $\l_j$ for $1\le i < j \le n$. This gives us the second level space $\pi_2:\cX_2\to\cX_1$ and the exceptional divisors $E_{\{i,j\}}\subset \cX_2$. Notice that, although the lines $\l_{\{i,j\}}$ and $\l_{\{i,k\}}$ intersect at $\l_i$ in $\P^{n-1}$, the blowups in step 1 would separate their strict transforms. Therefore, the resulting space $\cX_2$ is independent of the order of blowups.
	
	\item We repeat the above process until we reach codimension $2$. More precisely, in the $k$-th step ($1\le k\le n-2$) we do the following. For $\alpha\subset [n]$ a subset of $k$ elements, let $\l_\alpha$ denote the linear subvariety of $\P^{n-1}$ generated by $\{\l_i|i\in\alpha\}$, and $\overline{\l}_\alpha$ the strict transform of $\l_\alpha$ in $\cX_{k-1}$. The blowups in the previous steps have the effect of blowing up all coordinate linear subspaces on $\l_\alpha$, thus $\overline{\l}_\alpha$ is a permutohedral variety of dimension $k-1$. We blow up all the $\overline{\l}_\alpha$ in this step. The $\overline{\l}_\alpha$'s intersect with each other along coordinate subspaces of lower dimensions, hence are separated by previous blowups. Thus, the order to perform blowups in this step does not matter. This produces the space $\cX_k$, the map $\pi_k:\cX_k\to\cX_{k-1}$, as well as the exceptional divisors $E_\alpha\subset\cX_k$
\end{enumerate}

The end result is a sequence of spaces and projection maps:
\[
\xymatrix{
	\cX_{n-2}\ar[r]^{\pi_{n-2}} &\cX_{n-3} \ar[r]^{\pi_{n-3}}& \cdots \ar[r]^{\pi_{2}}&
	\cX_{1} \ar[r]^{\pi_{1}\ \ \ \ \ } & \cX_0 = \P^{n-1}
}
\]
The variety $\cX=\cX_{n-2}$ is the permutohedral variety. 

\begin{defn}
	We call the varieties $\cX_k$ ($0\le k\le n-2$) the {\em prepermutohedral variety} of order $k$. 
\end{defn}

\section{Permutohedral varieties as Hessenberg varieties}

We consider regular semisimple Hessenberg varieties of type A. To set the notations, let $\SS$ be an $n\times n$ complex diagonal matrix with different diagonal entries $s_1,\cdots,s_n$ and $h:[n]\to[n]$ be a Hessenberg function, i.e., $h(i)\ge h(j)\text{ for all $n\ge i>j\ge 1$}$ ($h$ is non-decreasing) and $h(i)\ge i$ for $i=1,\cdots,n$. 
\[
\Hess(\SS,h)=\left\{ \Vflag = (\{0\}=V_0\subset V_1\subset \cdots\subset V_n=\C^n)\  |\ \SS(V_i)\subset V_{h(i)} \text{ for $i=1,\cdots,n$}\right\}.
\] 
We also use the list notation $h=(h(1),\cdots,h(n))$ to denote a Hessenberg function. In this section, we focus on the specific Hessenberg function $h_+=(2,3,\cdots, n-1,n,n)$, i.e. $h_+(i)=\min(i+1,n)$ for $1\le i\le n$. 
We start with an observation in linear algebra. 
\begin{lem}
\label{lem:lin_rel}
Let $\SS$ be an $n\times n$ complex diagonal matrix with different diagonal entries $s_1,\cdots,s_n$ and $\v=\transp{(z_1, \cdots,z_n)}\in \C^n$ be a (column) vector. If at least $k$ of the $z_i$'s are nonzero (i.e. there are no more than $n-k$ of the $z_i=0$), then the vectors
\[
\v,\ \SS\v,\ \cdots,\ \SS^{k-1}\v
\]
are linearly independent. If exactly $k$ of the $z_i$'s are nonzero, then the vectors
\[
\v,\ \SS\v,\ \cdots,\ \SS^{k}\v
\]
are linearly dependent.
\end{lem}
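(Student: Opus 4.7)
The plan is to pass to the matrix whose columns are $\v,\SS\v,\ldots,\SS^{k-1}\v$ and exhibit a nonzero $k\times k$ minor by means of a Vandermonde determinant. The $(i,j)$-entry of this $n\times k$ matrix $M$ is $z_i s_i^{j-1}$, since $\SS$ is diagonal with diagonal entries $s_1,\ldots,s_n$.

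First I would assume $z_{i_1},\ldots,z_{i_k}$ are all nonzero (for some indices $i_1<\cdots<i_k$ in $[n]$) and restrict $M$ to these rows, obtaining a $k\times k$ submatrix $M'$ with entries $z_{i_a}s_{i_a}^{b-1}$. Factoring $z_{i_a}$ out of the $a$-th row gives
\[
\det M' \;=\; \Bigl(\prod_{a=1}^{k} z_{i_a}\Bigr)\cdot \det\bigl(s_{i_a}^{b-1}\bigr)_{a,b=1}^{k}.
\]
The second factor is a Vandermonde determinant $\prod_{a<b}(s_{i_b}-s_{i_a})$, which is nonzero since the diagonal entries of $\SS$ are distinct, and the first factor is nonzero by assumption. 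Hence $M$ has rank $k$, proving that $\v,\SS\v,\ldots,\SS^{k-1}\v$ are linearly independent.

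For the second assertion, suppose exactly $k$ of the coordinates $z_i$ are nonzero; let $I=\{i\mid z_i\neq 0\}$, so $|I|=k$. Since $\SS$ is diagonal, $\SS^j\v$ has $i$-th coordinate $s_i^j z_i$, which vanishes whenever $i\notin I$. Therefore every vector of the form $\SS^j\v$ lies in the $k$-dimensional coordinate subspace $W=\Span(e_i:i\in I)\subset\C^n$. The $k+1$ vectors $\v,\SS\v,\ldots,\SS^k\v$ all lie in $W$, so they must be linearly dependent.

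There is no real obstacle here; the only thing to be slightly careful about is the indexing in the Vandermonde step, and the observation that the assumption ``at least $k$ nonzero coordinates'' already suffices to pick out $k$ such indices to form the nonsingular minor, so both parts of the lemma reduce to the same bookkeeping.
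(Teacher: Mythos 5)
Your proof is correct, but it takes a somewhat different route from the paper's. For the independence statement the paper argues via polynomials: a linear relation $a_0\v+a_1\SS\v+\cdots+a_{k-1}\SS^{k-1}\v=0$ forces each $s_i$ with $z_i\ne 0$ to be a root of the degree-$(k-1)$ polynomial $\sum_j a_j z^j$, and such a polynomial cannot have $k$ distinct roots unless it is zero. Your nonvanishing Vandermonde minor is the determinantal repackaging of exactly this fact, so the first parts are equivalent in content, though your version makes the rank-$k$ statement explicit. The difference is more substantial in the second part: the paper exhibits an explicit dependence, namely the coefficients of $(z-s_{i_1})\cdots(z-s_{i_k})$ (where $i_1,\dots,i_k$ index the nonzero coordinates) applied to $\v,\SS\v,\ldots,\SS^{k}\v$, whereas you observe that all $k+1$ of these vectors lie in the $k$-dimensional coordinate subspace supported on $I=\{i \mid z_i\ne 0\}$ and invoke a dimension count. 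Your argument for that part is shorter and computation-free; the paper's has the mild advantage of producing the dependence relation explicitly. Both proofs are complete.
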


\begin{proof}
Assuming $z_i\ne 0$, then a linear relation 
\[
a_0 \v+ a_1 \SS\v + \cdots + a_{k-1} \SS^{k-1}\v =0
\]
implies the relation (on the first coordinate)
\[
a_0 + a_1 s_i + \cdots + a_{k-1} s_i^{k-1} = 0.
\]
That is, $s_i$ is a root of the polynomial $\sum_{i=0}^{k-1} a_i z^i$. The first part of the lemma is then a consequence of the fact that a polynomial equation of degree $k-1$ cannot have $k$ or more distinct roots.

For the second part of the lemma, assume that $z_1,\cdots,z_k$ are the non-zero $z_i$'s. Then the coefficients of the polynomial $(z-s_1)\cdots(z-s_k)$ give a non-trivial linear relation among $\v,\ \SS\v,\ \cdots,\ \SS^{k}\v$.
\end{proof}

In particular, if $\origin=(0,\cdots,0)$ denotes the origin, and $\v\in\C^n\setminus\{\origin\}$, 
then $\v$ gives rise to a point $[\v]\in \P^{n-1}$. We further denote
\[
\Ind := \bigcup_{1\le i<j\le n}(z_i=z_j=0).
\]
The set $\Ind$ is the indeterminate set of the Cremona involution $J$ defined in the introduction.
If $\v\in \P^{n-1}\setminus \Ind$, then at least $n-1$ of the $x_i$ are nonzero. Thus, by the lemma, the vectors
\[
\v,\ \SS\v,\ \cdots,\ \SS^{n-1}\v
\]
are linearly independent, and the following is a well-defined flag (i.e. the dimension of the vector spaces are correct).
\[
\Vflag=\left(\lr{\origin}\subset\lr{\v}\subset\lr{\v, \SS\v}\subset\cdots\subset\lr{\v,\SS\v,\cdots\SS^{n-2}\v}\subset\C^n\right).
\]
Moreover, it is obvious that $\Vflag\in \Hess(\SS,h_+)$. Conversely, suppose that $\Vflag = \left(\{0\}\subset V_1\subset \cdots\subset V_{n-1}\subset\C^n\right)$ is a flag in $\Hess(\SS,h_+)$, and the one-dimensional space $V_1$, as an element of $\P^{n-1}$, satisfies $V_1\not\in \Ind$, then $\Vflag$ must be in the form 
\[
\Vflag=\left(\lr{\origin}\subset\lr{\v}\subset\lr{\v, \SS\v}\subset\cdots\subset\lr{\v,\SS\v,\cdots\SS^{n-2}\v}\subset\C^n\right)
\]
for any nonzero $\v\in V_1$. This defines an isomorphism 
\begin{align*}
	 \P^{n-1}\setminus\Ind 
	       & \longrightarrow \cU \subset \Hess(\SS,h_+)\\
	\v     & \longmapsto  
	         \left(\lr{\origin}\subset\lr{\v}\subset\lr{\v, \SS\v}\subset\cdots\subset\lr{\v,\SS\v,\cdots\SS^{n-2}\v}\subset\C^n\right),
\end{align*}
where $\cU$ is the open subset of $\Hess(\SS,h_+)$ consists of all flags $\Vflag = \left(\{0\}\subset V_1\subset \cdots\subset V_{n-1}\subset\C^n\right)$ such that $V_1\not\in \Ind$.
We will extend this isomorphism to isomorphisms between blowups of $\P^{n-1}$ and Hessenberg type varieties defined in the next paragraph.

In order to do so, we introduce a type of partial flag variety. For $0\le k\le n-2$, define 
\[
\Flag^{(k+1)}(\C^n):=\left\{\Vflag=(V_0:=\lr{\origin}\subset V_1\subset \cdots \subset V_{k+1})\ |\ \dim_\C(V_i)=i\text{ for $i=1,\cdots,k+1$}\right\}. 
\]
We also define the corresponding Hessenberg type variety
\[
\Hess^{(k+1)}(\SS,h_+):=\left\{\Vflag\in \Flag^{(k+1)}(\C^n)\ |\ \SS V_i\subset V_{i+1}\text{ for $i=0,\cdots,k$}\right\}.
\]
Notice that $\Flag^{(n-1)}(\C^n)=\Flag(\C^n)$ and $\Hess^{(n-1)}(\SS,h_+)=\Hess(\SS,h_+)$. 
The main goal of this section is to show the following. 
\begin{prop}
There is a natural isomorphism 
\[ 
\cX_{k}\stackrel{\cong}{\longrightarrow} \Hess^{(k+1)}(\SS,h_+)
\]
for $k=0,\cdots,n-2$, where $\cX_{k}$ is the prepermutohedral variety.
\end{prop}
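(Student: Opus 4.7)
The plan is to prove the proposition by induction on $k$, with the base case $k=0$ being the tautological identification $\cX_0 = \P^{n-1} = \Hess^{(1)}(\SS,h_+)$, since the defining condition $\SS V_0\subset V_1$ is vacuous.

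For the inductive step, assume $\varphi_{k-1}:\cX_{k-1}\xrightarrow{\cong}\Hess^{(k)}(\SS,h_+)$. The key object is the forgetful morphism
\[
p_k:\Hess^{(k+1)}(\SS,h_+)\to\Hess^{(k)}(\SS,h_+),\qquad (V_0,\ldots,V_{k+1})\mapsto(V_0,\ldots,V_k).
\]
Over a flag $(V_0,\ldots,V_k)$, the fiber parameterises lines $V_{k+1}/V_k\subset \C^n/V_k$ containing the image of $\SS V_k$ modulo $V_k$. If $V_k$ is not $\SS$-stable, this image is a nonzero line and $V_{k+1}=V_k+\SS V_k$ is forced; if $V_k$ is $\SS$-stable, the fiber is all of $\P(\C^n/V_k)\cong\P^{n-k-1}$. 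Since $\SS$ is diagonal with distinct eigenvalues, $V_k$ is $\SS$-stable exactly when $V_k=V_\alpha$ is a coordinate subspace with $\alpha\subset[n]$, $|\alpha|=k$; moreover, $\{V_k=V_\alpha\}\cap\Hess^{(k)}(\SS,h_+)$ is the full Hessenberg variety of $\SS|_{V_\alpha}$ on $V_\alpha\cong \C^k$, hence the $(k-1)$-dimensional permutohedral variety. Denote the union of these by $Z_k$.

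Next I would identify $\varphi_{k-1}^{-1}(Z_k)$ with $\bigsqcup_{|\alpha|=k}\overline{\l}_\alpha$ in $\cX_{k-1}$. By Lemma~\ref{lem:lin_rel}, a vector $\v\in\l_\alpha$ with exactly $k$ nonzero coordinates satisfies $\lr{\v,\SS\v,\ldots,\SS^{k-1}\v}=V_\alpha$, so the open-set formula $\v\mapsto(\lr{\v},\ldots,\lr{\v,\ldots,\SS^{k-1}\v})$ sends a dense open subset of $\l_\alpha$ into the locus $\{V_k=V_\alpha\}$. Since $\varphi_{k-1}$ is an isomorphism and both sides are irreducible of dimension $k-1$, taking closures forces $\varphi_{k-1}(\overline{\l}_\alpha)=\{V_k=V_\alpha\}$. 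Thus $\varphi_{k-1}$ identifies the blowup center of $\pi_k:\cX_k\to\cX_{k-1}$ with $Z_k$. Because $\cX_k$ is by definition the blowup of $\cX_{k-1}$ along $\bigsqcup_{|\alpha|=k}\overline{\l}_\alpha$, the proof reduces to showing that $p_k$ is the blowup of $\Hess^{(k)}(\SS,h_+)$ along $Z_k$; the universal property of blowups then lifts $\varphi_{k-1}$ uniquely to the desired $\varphi_k$.

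The principal obstacle is the blowup identification for $p_k$. The Hessenberg condition $\SS V_{k-1}\subset V_k$ makes the composition $\cV_k\hookrightarrow\C^n\xrightarrow{\SS}\C^n\twoheadrightarrow\C^n/\cV_k$ vanish on the subbundle $\cV_{k-1}$, inducing a bundle map $\bar{\SS}:\cV_k/\cV_{k-1}\to\C^n/\cV_k$ on $\Hess^{(k)}(\SS,h_+)$ whose scheme-theoretic zero locus is exactly $Z_k$. Projectivising, the image of $\bar{\SS}$ defines a rational section of $\P(\C^n/\cV_k)\to\Hess^{(k)}(\SS,h_+)$ that is regular off $Z_k$, and $\Hess^{(k+1)}(\SS,h_+)$ is the closure of its graph inside $\P(\C^n/\cV_k)$ — the standard presentation of the blowup along the degeneracy locus of a map from a line bundle to a vector bundle. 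Finishing this rigorously will require checking smoothness and the expected codimension of $Z_k$ in $\Hess^{(k)}(\SS,h_+)$ together with a local coordinate computation near a point of $Z_k$; this is where I expect the main technical work to lie.
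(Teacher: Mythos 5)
Your proposal follows essentially the same route as the paper: induction on $k$ via the forgetful morphism $\Hess^{(k+1)}(\SS,h_+)\to\Hess^{(k)}(\SS,h_+)$, shown to be birational with indeterminacy exactly over the $\SS$-stable $V_k$'s (the strict transforms $\overline{\l}_\alpha$ of the coordinate $k$-planes), with fibers $\P(\C^n/V_\alpha)$ there matching the exceptional divisors of $\pi_k$. The one step you flag as the remaining technical obstacle --- verifying that the forgetful map is literally the blowup along $Z_k$ --- is not carried out in the paper either, which is content with the pointwise identification of $E_\alpha$ with $\P(\C^n/\l_\alpha)$, so your degeneracy-locus framing is, if anything, a more precise packaging of the same argument.
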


\begin{proof}
It is clear that $\cX_0:=\P^{n-1}\cong \Hess^{(1)}(\SS,h_+)$. For $\Vflag= (V_0:=\lr{\origin}\subset V_1\subset V_2)\in \Hess^{(2)}(\SS,h_+)$, suppose that $\origin\ne\v=(z_1,\cdots,z_n)\in V_1$.  By lemma~\ref{lem:lin_rel}, if at least two of the $x_i$'s are non-zero, then $\v$ and $\SS\v$ are linearly independent, and $V_2=\lr{\v,\SS\v}$ is determined. 
If only one of the $z_i\ne 0$, then $V_1=\l_i$ (see Section~\ref{section:blowups} for the definition of $\l_i$) for some $i=1,\cdots,n$ and thus $\SS V_1=V_1$. To determine $V_2$, we need an extra piece of information, which is the direction that give us the second dimension of $V_2$. This can be specified as a point in $\P(\C^n/V_1)$, which is canonically isomorphic to the exceptional divisor $E_i$ when we blowup $\P^{n-1}$ at $\l_i$. Therefore, after we blowup all the $\l_i$'s, then for those $V_1$ such that $\SS V_1=V_1$, we also know what $V_2$ is. This gives the isomorphism $\cX_1\to\Hess^{(2)}(\SS,h_+)$. 

We continue the blowup process inductively as follows. First, we introduce the ``forgetful'' morphism
\[
f^{(k)}: \Hess^{(k+1)}(\SS,h_+)\to\Hess^{(k)}(\SS,h_+) \text{ for $k=1,\cdots, n-3$,}
\] 
which sends a flag of $k+1$ vector spaces to the first $k$ of them, and ``forgets'' the last vector space. 
\begin{claim}
The morphism $f^{(k)}$ is a birational map.	
\end{claim}

\begin{proof}
Given $\Vflag= (V_0:=\lr{\origin}\subset V_1\subset\cdots\subset V_{k})\in \Hess^{(k)}(\SS,h_+)$, since $\SS V_{k-1}\subset V_{k}$, we know from linear algebra that $\dim_\C(\lr{V_{k}\cup\SS V_{k}})=k\text{ or } k+1$. Moreover, $\dim_\C(\lr{V_{k}\cup\SS V_{k}})=k+1$ is the generic situation in $\Hess^{(k)}(\SS,h_+)$ and 
the set of all $\Vflag$ such that 
$\dim_\C(\lr{V_{k-1}\cup\SS V_{k}})=k$ is a closed subset of $\Hess^{(k)}(\SS,h_+)$. If $\dim_\C(\lr{V_{k}\cup\SS V_{k}})=k+1$, then sending 
\[
(V_0\subset V_1\subset\cdots\subset V_{k})\longmapsto (V_0\subset V_1\subset\cdots\subset V_{k}\subset \lr{V_{k}\cup\SS V_{k}})
\]
gives the inverse of $f^{(k)}$ for a generic flag in $\Hess^{(k)}(\SS,h_+)$. 	
\end{proof}

If $\dim_\C(\lr{V_{k}\cup\SS V_{k}})=k$, then $\SS V_{k}=V_{k}$, and a reflection of Lemma~\ref{lem:lin_rel} tells us that, via the isomorphism $\cX_{k}\cong \Hess^{(k)}(\SS,h_+)$, $\Vflag$ lies in the strict transform of $\l_\alpha$, denoted by $\overline{\l}_\alpha$, for some $\alpha\subset [n]$ with $k$ elements. 

If we blowup $\cX_{k-1}$ along $\overline{\l}_\alpha$, then the exceptional divisor $E_\alpha$ is canonically identified with the projective normal bundle $\P(N_{\l_{\alpha}\subset X_{k-1}})$. 
A point on $E_\alpha$ carries the information of $\Vflag$, together with a (projective) normal direction of $\l_\alpha$ in $\C^n$, i.e. an element in $\P(\C^n/\l_\alpha)$. This assigns a unique flag in $\Hess^{(k+1)}(\SS,h_+)$. More precisely, if $\v\in\C^n/\l_\alpha$ represents the direction in $\P(\C^n/\l_\alpha)$, then one sets $V_{k+1}=\lr{V_k,\v}$. Once we blowup $\cX_{k-1}$ along the strict transforms of all $\l_{\alpha}$ for $\alpha\subset [n]$ with $k$ elements, we obtain the isomorphism $\cX_{k}\cong \Hess^{(k+1)}(\SS,h_+)$.
\end{proof}

\begin{ex}
	Suppose we have $\v_1=(1,1,0,0,0)\in\C^5$, then $V_1=\lr{\v_1}\subset V_2=\lr{\v_1,\SS\v_1}$ form the first two spaces in the flag but $\SS V_2=V_2$. Thus we need to blowup $\l_{\{1,2\}}$. Notice that $V_2=\l_{\{1,2\}}\subset\C^5$ and elements in $E_{\{1,2\}}$ over $\v_1$ are in one-to-one correspondence to projectivized normal vectors for the embedding $\C^2\hookrightarrow\C^5$ at $\v_1$, i.e., $\P(\C^5/\C^2)$. Suppose we pick $\v_2=(1,1,1,0,0)$ as a representative for an element in $\P(\C^5/\C^2)$, then that gives us $V_3=\lr{V_2,\v_2}$ (one can check that $V_3$ is independent of the choice of $\v_2$), but then we would have $\SS V_3=V_3$ again. This means that the element represented by $\v_2$ in $E_{\{1,2\}}$ is in the strict transform of the set $\l_{\{1,2,3\}}$. When we blowup $\l_{\{1,2,3\}}$, elements of $E_{\{1,2,3\}}$ over $\v_2$ will then corresponds to $\P(\C^5/\C^3)$. Picking a representative, say $\v_3=(1,1,1,1,1)$, we will have $V_4=\lr{V_3,\v_3}$ and $V_5=\C^5$. This gives a flag 
	$\Vflag=\left(\lr{\origin}\subset V_1\subset \cdots \subset V_5\right)\in  \Hess(\SS,h_+)$. 
\end{ex}

\section{Prepermutohedral varieties as toric varieties}

From the isomorphism constructed in the previous section, we can discover how the torus $(\C^*)^{n-1}$ sits inside $\Hess^{(k+1)}(\SS,h_+)$, as follows. The point $\bz=(z_1,\cdots,z_{n-1})\in (\C^*)^{n-1}$ corresponds to the flag
\[
\Vflag:=\left(\lr{\origin}\subset\lr{\v}\subset\lr{\v, \SS\v}\subset\cdots\subset\lr{\v,\SS\v,\cdots\SS^{k}\v}\subset\C^n\right)
\]
where $\v=(1,z_1,\cdots,z_{n-1})$. We denote this correspondence as an injective map
$\phi_{k+1}:(\C^*)^{n-1}\hookrightarrow \Hess^{(k+1)}(\SS,h_+)$ by $\phi(\bz)=\Vflag$.

  One can also observe the algebraic group structure of $(\C^*)^{n-1}\subset\Hess^{(k+1)}(\SS,h_+)$.
Given $(z'_1,\cdots,z'_{n-1})\in (\C^*)^{n-1}$ and $\v'=(1,z'_1,\cdots,z'_{n-1})$, the product of 
$(z_1,\cdots,z_{n-1})$ and $(z'_1,\cdots,z'_{n-1})$ corresponds to the flag
\[
\left(\lr{\origin}\subset\lr{\v\v'}\subset\lr{\v\v', \SS(\v\v')}\subset\cdots\subset\lr{\v\v',\SS(\v\v'),\cdots,\SS^{k}(\v\v')}\subset\C^n\right),
\]
where $\v\v'$ is the coordinate-wise product of $\v$ and $\v'$. That is, $\phi(\bz)\phi(\bz'):=\phi(\bz\bz')$.
Moreover, one can discover the action of $(\C^*)^{n-1}$ on $\Hess^{(k+1)}(\SS,h_+)$ similarly.

\subsection{Fan structure for the prepermutohedral varieties}
Given that $\cX_k\cong \Hess^{(k+1)}(\SS,h_+)$ is a toric variety, we would like to know the structure of the corresponding fan, and the geometry properties we can conclude from the fan structure. The standard reference for this part is \cite{Fulton, CLS}. 

First, recall the structure of the fan $\fan_{\P^{n-1}}$ corresponding to $\P^{n-1}$. Let $e_1,\cdots,e_{n-1}$ be the standard basis of $\Z^{n-1}$ and $e_n=-(e_1+\cdots+e_{n-1})$. The cones in $\fan_{\P^{n-1}}$ are generated by proper subsets of $\{e_1,\cdots,e_n\}$. More precisely, for any proper subset $\alpha\subset [n]$, we denote $\sigma_\alpha$ to be the cone generated by $\{e_i|i\in\alpha\}$, i.e., $\sigma_\alpha=\{\sum_{i\in\alpha}a_i e_i|a_i\ge 0\}$. Then
\[
\fan_{\P^{n-1}} = \left\{ \sigma_\alpha \ |\ \alpha\subset [n]\right\},
\]
with the convention that $\sigma_\emptyset=\{\origin\}$.

As we showed earlier, the prepermutohedral variety $\cX_k$ ($1\le k \le n-2$) is obtained from $\P^{n-1}$ by blowing up the torus invariant subvarieties (in the order of dimensions) up to dimensions $k-1$. For the fan structure, this means we perform the star subdivisions (see \cite[Definition 3.3.17]{CLS}) on all cones of codimensions $0, 1,$ and so on, up to cones of codimension $k-1$. 
We will describe the fan $\fank$ corresponds to the prepermutohedral variety $\cX_k$ after setting up some notations. 

For a non-empty proper subset $\alpha\subset[n]$, we define the vector $e_\alpha:=\sum_{i\in\alpha} e_i$. The rays (i.e. one-dimensional cones) of $\fank$ are generated by $e_\alpha$ for non-empty proper subset $\alpha\subset[n]$ such that $|\alpha|\ge n-k$ or $|\alpha|=1$. To simplify notation, we write $e_i$ instead of $e_{\{i\}}$ for $i\in[n]$.

A chain of subsets of $[n]$ is a sequence of strict inclusions  
\[
\cC=(\alpha_0\subset \cdots\subset \alpha_p)
\]
of proper subsets of $[n]$. It is also allowed that $p=0$, i.e., $\cC=(\alpha_0)$ is a chain consisting of only one set. 
We will also use an alternative list notation to denote a chain. We use special bracket symbols 
$\lbr\ \rbr$ to enclose the list: first list the numbers in $\alpha_0$, then list the numbers in $\alpha_1\setminus \alpha_0$ (separated by a bar), and so on, all the way to $[n]\setminus \alpha_p$. For example, suppose $n=9$ and we have
\[
\cC=(\{1,4\}\subset \{1,2,3,4\} \subset \{1,2,3,4,6,7,9\}),
\]
then in the list notation, $\cC=\lbr 1,4 | 2,3 | 6,7,9 |5,8\rbr$.

Cones in $\fank$ are in one-to-one correspondence with chains $\cC$
such that $|\alpha_0|\le n-k-1$ and $|\alpha_j|\ge n-k$ for $j>0$. The correspondence is given as follows. For each such chain $\cC$, we associate it with the cone $\sigma_\cC$ generated by the vectors $e_i$, $i\in \alpha_0$ and $e_{\alpha_j}$, $j\ge 1$. The dimension of the cone $\sigma_\cC$ is equal to 
$|\alpha_0|+p$.

In particular, the top dimensional cones of $\fank$ are in one-to-one correspondence with chains of the form
\[
\cC=(\alpha_0\subset \alpha_{n-k}\subset \cdots\subset\alpha_{n-1})
\]
such that $|\alpha_0|=n-k-1$ and $|\alpha_j|=j$ for all $j=n-k,\cdots, n-1$. (For $k=0$, this means $\cC=(\alpha_0)$ with $|\alpha_0|=n-1$). For $k\ge 2$, such chains are determined by the sequence of numbers
\begin{align*}
a_n&:=[n]\setminus\alpha_{n-1},\\
a_j&:= \alpha_{j}\setminus\alpha_{j-1},\ \ n-k+1\le j\le n-1\\ 
a_{n-k}&:=\alpha_{n-k}\setminus\alpha_0.
\end{align*}

For $k=0$, such chains are determined by $a_n = [n]\setminus\alpha_0$; and for $k=1$ the chains are determined by $a_n, a_{n-1} = \alpha_{n-1}\setminus\alpha_0$. In the list notation, by a slightly abuse of notation, we would write 
$\cC=\lbr \alpha_0 | a_{n-k} | \cdots | a_n\rbr$.

Notice that, since $\alpha_0$ can be written as $\alpha_0=[n]\setminus \{a_j|n-k\le j\le n\}$, the information on the list of numbers $a_{n-k},\cdots, a_n$ is sufficient to determine the chain $\cC$.
Therefore, the top dimensional cones in $\fank$ are in one-to-one correspondence with permutations of $k+1$ different numbers from $[n]$. The number of top dimensional cones in $\fank$ is then given by the permutation number 
\[
P(n,k+1)=\frac{n!}{(n-k-1)!}.
\]
A result in toric varieties \cite[Section 3.2]{Fulton} states that the Euler characteristic of a toric variety is equal to the number of top dimensional cones in its associated fan. Therefore, we conclude the following.

\begin{prop}
The Euler characteristic of the variety $\cX_{k}$ is given by
\[
\chi(\cX_{k})=P(n,k+1).
\]
\end{prop}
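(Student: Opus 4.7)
The plan is to exploit the toric structure of $\cX_k$ established in the preceding sections and compute $\chi(\cX_k)$ as the number of top-dimensional cones of the associated fan $\fank$, invoking the standard fact that for a smooth complete toric variety the Euler characteristic equals the number of torus fixed points, which in turn equals the number of maximal cones of its fan.

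First, I would pin down the combinatorial description of $\fank$. Starting with $\fan_{\P^{n-1}}$, whose maximal cones are $\sigma_\alpha$ for $\alpha \subsetneq [n]$ with $|\alpha| = n-1$, the iterated blowup construction of $\cX_k$ corresponds to performing star subdivisions along the rays $e_\alpha = \sum_{i \in \alpha} e_i$ for all proper subsets $\alpha \subset [n]$ with $n-k \le |\alpha| \le n-1$, in order of increasing $|\alpha|$ (equivalently, decreasing dimension of the blown-up subvariety). An induction on the number of subdivisions shows that the cones of $\fank$ are precisely the $\sigma_\cC$ associated to chains $\cC = (\alpha_0 \subsetneq \alpha_1 \subsetneq \cdots \subsetneq \alpha_p)$ of proper subsets of $[n]$ satisfying $|\alpha_0| \le n-k-1$ and $|\alpha_j| \ge n-k$ for $j \ge 1$, where $\sigma_\cC$ is generated by $\{e_i : i \in \alpha_0\} \cup \{e_{\alpha_j} : 1 \le j \le p\}$ and has dimension $|\alpha_0| + p$.

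Second, I would restrict to maximal cones, i.e., those of dimension $n-1$. The constraints force $|\alpha_0| = n-k-1$ and $|\alpha_j| = n-k-1+j$ for $j = 1, \dots, k$, so $p = k$ and the chain has the form $\alpha_0 \subsetneq \alpha_{n-k} \subsetneq \cdots \subsetneq \alpha_{n-1}$ with each successive inclusion adding exactly one element. Encoding such a chain by the ordered tuple of new elements $(a_{n-k}, a_{n-k+1}, \dots, a_n)$, where $a_{n-k} = \alpha_{n-k}\setminus\alpha_0$, $a_j = \alpha_j \setminus \alpha_{j-1}$ for $n-k+1 \le j \le n-1$, and $a_n = [n]\setminus\alpha_{n-1}$, yields an ordered $(k+1)$-tuple of distinct elements of $[n]$. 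Conversely, any such ordered tuple determines $\alpha_0 = [n]\setminus\{a_{n-k},\ldots,a_n\}$ and hence the entire chain, so the correspondence is a bijection. The count of ordered $(k+1)$-tuples is $P(n,k+1) = \frac{n!}{(n-k-1)!}$.

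Finally, I would cite \cite[Section 3.2]{Fulton} (as the excerpt already does) to conclude that $\chi(\cX_k)$ equals this number of maximal cones. The main obstacle is the inductive verification in the first step that the fan after iterated star subdivisions is exactly the one described: one must check that each star subdivision with respect to $e_\alpha$ (performed in the prescribed order) replaces precisely the cones containing $\sigma_\alpha$ by their refinements, and that the combinatorial ``chain'' description is preserved through the process. Once this bookkeeping is in place, the bijection and the count are immediate.
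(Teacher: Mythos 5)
Your proposal is correct and follows essentially the same route as the paper: the text preceding the proposition establishes the chain description of $\fank$, identifies the maximal cones with ordered $(k+1)$-tuples of distinct elements of $[n]$ via the sequence $a_{n-k},\dots,a_n$, counts them as $P(n,k+1)$, and cites \cite[Section 3.2]{Fulton} to equate this with $\chi(\cX_k)$. The only difference is one of emphasis: you flag the inductive verification of the fan after iterated star subdivisions as the main bookkeeping step, which the paper asserts without detailed proof.
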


We remark here that one can also calculate the Euler characteristics using the structure of the cohomology groups in the next section.

\subsection{Betti numbers of the prepermutohedral varieties}

In order to calculate the Betti numbers for $\cX_k$, we need to study more on the cone structure of the fan associated to $\cX_k$.

The intersection of two cones is given by the following operation on chains: Given 
$\cC=(\alpha_0\subset \cdots\subset \alpha_p)$ and 
$\cC'=(\alpha'_0\subset \cdots\subset \alpha'_{p'})$, define 
$\cC\cap\cC'$ to be the following.
\begin{itemize}
	\item The first set in the chain is $\alpha_0\cap\alpha'_0$.
	\item The rest of the chain is the greatest common subchain of the chains of sets $\alpha_1\subset\cdots\subset\alpha_p$ and $\alpha'_1\subset\cdots\subset\alpha'_{p'}$
\end{itemize}
 For instance, suppose $n=10$, 
$\cC=\lbr 1,4,10 | 2,3 | 6,7|9 |5,8\rbr$, and 
$\cC'=\lbr 1,4,6 | 7,2,3 | 5,9|8\rbr$, then 
$\cC\cap\cC' = \lbr 1,4 | 2,3,6,7 | 5,8, 9 \rbr$.
With this definition on the intersection of chains, we then have $\sigma_{\cC}\cap\sigma_{\cC'}=\sigma_{\cC\cap\cC'}$.

Also, with the same notations as above, for the chains $\cC, \cC'$, the corresponding cones $\sigma_{\cC}\subset \sigma_{\cC'}$ if $\alpha_0\subset\alpha'_0$ and for all $j>0$, $\alpha_j=\alpha'_{j'}$ for some $j'$.

We would like to assign a total order `$>$' on the top dimensional cones of $\fank$ with some desired property. Equivalently, we will define the order on the set of chains. Set $\tau_\cC$ to be the intersection of $\sigma_\cC$ with all $\sigma_{\cC'}$ that comes after  $\sigma_\cC$ (i.e. $\cC' > \cC$) and such that $\dim(\sigma_{\cC}\cap\sigma_{\cC'})=n-2$. The desired property is that
\begin{equation*} \label{condition_cones}
	\text{If $\tau_\cC \subset \sigma_{\cC'}$, then $\cC' \ge\cC$ } \tag{$*$}
\end{equation*} 

This is also the condition ($*$) in \cite[Section 5.2]{Fulton}. It implies that the classes $[V(\tau_\cC)]$ form a basis for $H_*(X;\Z)$ (see \cite[Theorem on p.102]{Fulton}). 
We claim that the reversed lexicographic order in the list notation for chains will satisfy (\ref{condition_cones}). 

To define the order, suppose $\cC=\lbr \alpha_0 | a_{n-k}|\cdots|a_n\rbr$ and $\cC'=\lbr\alpha'_0| a'_{n-k}|\cdots| a'_n\rbr$. We define $\cC<\cC'$ if for the greatest $j$ such that $a_j\ne a'_j$, we have $a_j<a'_j$. 

Under the above notations and definitions, there are two cases for which we can have $\cC'>\cC$ and $\dim(\sigma_{\cC}\cap\sigma_{\cC'})=n-2$:

\begin{itemize}
	\item[Case 1:] $\alpha_0=\alpha'_0$. In this case, we have $a'_{j+1}=a_j> a'_j = a_{j+1}$ for some $n-k\le j\le n-1$ and $a_i=a'_i$ for all other $n-k\le i\le n$, $i\ne j, j+1$.
	\item[Case 2:] $\alpha_0\ne\alpha'_0$. In this case, we have $a'_{n-k}>a_{n-k}$ and $a_i=a_i'$ for all $i>n-k$.
\end{itemize}

We need the following definition to describe the intersection $\tau_\cC$.

\begin{defn}
	Given a chain $\cC=\lbr \alpha_0 | a_{n-k}|\cdots|a_n\rbr$ in the list notation, a {\em descent} (resp. {\em ascent}) in $\cC$ is either $a_j > a_{j+1}$ (resp. $a_j < a_{j+1}$) for some $j=n-k,\cdots, n-1$, or $a > a_{n-k}$ (resp. $a < a_{n-k}$) for some $a\in\alpha_0$.
\end{defn}

We can now describe $\tau_\cC$. It is obtained from $\sigma_\cC$ through the following process.
\begin{itemize}
	\item Whenever there is a descent $a_j > a_{j+1}$, one removes the set $\alpha_j$ in the chain $\cC$.
	\item Whenever there is a descent $a > a_{n-k}$, one removes the number $a$ from the set $\alpha_0$.
\end{itemize}

For instance, suppose $n=9, k=4$ and $\cC=\lbr 1,2, 5, 8    | 4 | 3 | 6| 9| 7 \rbr$, then there are 
$4$ descents in $\cC$: $5>4, 8>4, 4>3$, and  $9>7$. The cone of intersections 
$\tau_\cC= \lbr 1,2| 5, 8 , 4 , 3 | 6| 9, 7 \rbr$, and $\dim(\tau_\cC)=5$.

One observes that every time there is an descent in $\cC$, the dimension of $\tau_\cC$ would go down by $1$. Therefore, the dimension of $\tau_\cC$ is given by
\[
\dim(\tau_\cC)= n-1 - \#(\text{descents in $\cC$)}= \#(\text{ascents in $\cC$}).
\] 
The complex dimension of corresponding orbit closure $V[\tau_\cC]$ will then be
\begin{align*}
\dim_\C(V[\tau_\cC]) &= n-1-\dim(\tau_\cC)\\
&= \#(\text{descents in $\cC$)}= n-1-\#(\text{ascents in $\cC$}).
\end{align*}
Notice that $\cC$ is the chain of a top dimensional cone with the property that for each block (numbers between two bars) in the chain corresponds to $\tau_\cC$, numbers in the block are descending in $\cC$. Thus, it is the smallest in the reverse lexicographic order among the longest chains that refine the chain for $\tau_\cC$, and we can conclude that  $\tau_\cC \subset \sigma_{\cC'}$ implies $\cC' \ge\cC$. Therefore, by \cite[Theorem on p.102]{Fulton}, the homology classes of orbit closures $V[\tau_\cC]$, as $\cC$ runs through all top dimensional cones, will form a basis of $H_*(X;\Z)$. This immediately implies the following.

\begin{prop}
	\label{prop:Betti}
The $2i$-th Betti number of $\cX_{k}$ is given by
\begin{align*}
	\beta_{2i}(\cX_{k})&= \# (\text{permutations of $k+1$ different numbers in $[n]$ with $i$ descents})\\
	&= \# (\text{permutations of $k+1$ different numbers in $[n]$ with $n-1-i$ ascents}).
\end{align*}
\end{prop}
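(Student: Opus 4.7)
The plan is to assemble the structural facts established in the paragraphs immediately preceding the statement. First, I would invoke Fulton's basis theorem \cite[Theorem on p.102]{Fulton}: because the reverse lexicographic order on chains satisfies condition (\ref{condition_cones}), the orbit closure classes $[V(\tau_\cC)]$, as $\cC$ runs over all top-dimensional cones of $\fank$, form a $\Z$-basis of $H_*(\cX_k;\Z)$. Since $\cX_k$ is a smooth complete toric variety, each $V(\tau_\cC)$ is a complex subvariety and so $[V(\tau_\cC)]$ contributes only to even-degree homology; in particular all odd Betti numbers vanish, and $[V(\tau_\cC)]$ lives in degree $2\dim_\C V(\tau_\cC)$.

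Next I would apply the dimension formula already derived, namely
\[
\dim_\C V(\tau_\cC) \;=\; \#(\text{descents in }\cC),
\]
together with the bijection $\cC \leftrightarrow (a_{n-k},\ldots,a_n)$ between top-dimensional cones of $\fank$ and permutations of $k+1$ distinct numbers drawn from $[n]$. The notion of descent on a chain $\cC = \lbr\alpha_0\,|\,a_{n-k}\,|\cdots|\,a_n\rbr$ coincides with the definition of descent on the associated permutation given in the introduction, since $\alpha_0 = [n]\setminus\{a_{n-k},\ldots,a_n\}$ in both settings. Combining these observations, $\beta_{2i}(\cX_k)$ counts precisely the permutations of $k+1$ distinct elements of $[n]$ with exactly $i$ descents, which is the first equality.

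For the second equality I would count the total number of comparison slots involved in the descent/ascent definition: there are $k$ internal comparisons $a_j$ vs $a_{j+1}$ for $n-k\le j\le n-1$, together with $|\alpha_0| = n-k-1$ comparisons $a$ vs $a_{n-k}$ for $a\in\alpha_0$, summing to $n-1$. Hence for each such permutation, $\#\text{descents} + \#\text{ascents} = n-1$, and complementation gives a bijection between permutations with $i$ descents and those with $n-1-i$ ascents. The main obstacle is essentially absent: all technical work — verifying condition (\ref{condition_cones}), computing $\dim(\tau_\cC)$, identifying chains with permutations — has already been carried out, and the proposition emerges as a direct consequence after the bookkeeping check that the total number of descent/ascent slots equals $n-1$.
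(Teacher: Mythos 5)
Your proposal is correct and follows essentially the same route as the paper: it assembles Fulton's basis theorem via condition ($*$), the formula $\dim_\C V(\tau_\cC)=\#(\text{descents in }\cC)$, and the bijection between top-dimensional cones and permutations of $k+1$ distinct elements of $[n]$, with the second equality coming from the observation that the $k + (n-k-1) = n-1$ comparison slots make descents and ascents complementary. The paper's argument is exactly this, so no further comment is needed.
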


We remark here that it is possible to find some recursive relations among these Betti numbers using the recursive structure of the cohomology groups discuss in the next section (\ref{eq:cohomology_preperm}).

\section{The dot action on cohomology groups}
\label{sec:dot_on_preperm}

We investigate the dot action on the cohomology groups of the pre-permutohedral varieties. We work on the usual cohomology groups.
In this section and the next, we sometimes need to specify the dimension of a prepermutohedral variety. In such situation, we will use $\cX^{n-1}_k$ to denote a prepermutohedral variety of dimension $n-1$ and of order $k$.

\subsection{The cohomology groups of prepermutohedral varieties} First, we recall the standard result on the cohomology of blowup spaces. For more details, see \cite[\S 7.3.3]{Voisin}.
	Let $X$ be a K\"{a}hler manifold of dimension $n$, $Z\subset X$ be a submanifold, and $\tilde{X}_Z\stackrel{\tau}{\longrightarrow}X$ be the blowup of $X$ along $Z$. Then $\tilde{X}_Z$ is still a K\"{a}hler manifold. Let $E=\tau^{-1}(Z)$ be the exceptional divisor, then $E=\P(N_{Z\subset X})$ is the projective bundle of the normal bundle of $Z$ in $X$. Thus $E$ is of rank $r-1$, where $r=\codim(Z)$, and $E\stackrel{j}{\hookrightarrow}X$ is a hypersurface.

\begin{thm}
	\cite[Theorem 7.31]{Voisin}
	\label{thm:blowup}
 Let $h=c_1(\O_E(1))\in H^2(E;\Z)$, we have isomorphism
	\[
	\xymatrix{
		H^p(X;\Z)\oplus\left(\bigoplus_{i=0}^{r-2} H^{p-2i-2}(Z;\Z)\right)
		\ar[rrrrr]^{\tau^*+\sum_i j_* \circ (\cup h^i) \circ \tau|_E^*} &&&&& H^p(\tilde{X}_Z;\Z).
	}
	\]
The map on the second component is decomposed as follows.
\[
	\xymatrix{
	H^{p-2i-2}(Z;\Z)
	\ar[r]^{\tau|_E^*} & H^{p-2i-2}(E;\Z)
	\ar[r]^{\cup h^i} & H^{p-2}(E;\Z)
	\ar[r]^{j_*} & H^p(\tilde{X}_Z;\Z).
}
\]
Here $j_*$ is the Gysin morphism which is defined as the Poincare dual of the map
\[
j_*: H_{2n-p}(E;\Z) \to H_{2n-p}(\tilde{X}_Z;\Z).
\]

\end{thm}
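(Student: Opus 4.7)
The plan is to derive the decomposition by combining the projective bundle formula for $E$ with a comparison of the long exact sequences of the pairs $(\tilde{X}_Z,E)$ and $(X,Z)$. Since $E=\P(N_{Z\subset X})$ is a projective bundle of rank $r-1$ over $Z$, the Leray--Hirsch theorem gives
\[
H^*(E;\Z)\cong\bigoplus_{i=0}^{r-1}\tau|_E^*H^{*-2i}(Z;\Z)\cdot h^i
\]
as a free $H^*(Z;\Z)$-module, where $h=c_1(\O_E(1))$. So on $E$ there are $r$ summands built from the cohomology of $Z$, but the statement only retains $r-1$ of them for $\tilde{X}_Z$; one summand must be absorbed into the image of $\tau^*$.

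Next, I would use that $\tau$ is an isomorphism away from $E$, so excision yields $H^*(\tilde{X}_Z,E;\Z)\cong H^*(X,Z;\Z)$. Comparing the long exact sequences of these two pairs via the vertical maps induced by $\tau^*$, $\tau|_E^*$, and the excision isomorphism produces a commutative ladder. A five-lemma style argument then reduces everything to computing the cokernel of $\tau|_E^*:H^*(Z;\Z)\to H^*(E;\Z)$, which by Leray--Hirsch is exactly $\bigoplus_{i=1}^{r-1}H^{*-2i}(Z;\Z)\cdot h^i$; after pushing forward by the Gysin map $j_*$ and accounting for the degree shift of $2$, one obtains the $r-1$ summands $j_*\circ(\cup h^i)\circ\tau|_E^*$ landing in $H^p(\tilde{X}_Z;\Z)$ for $0\le i\le r-2$ as in the statement.

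The main obstacle is justifying the elimination of the $h^{r-1}$ summand, which amounts to the self-intersection formula $j^*j_*=\cup(-h)$ for the exceptional divisor, equivalently $[E]|_E=-h$ in $H^2(E;\Z)$. Combined with $j_*(1)=[E]$ and the standard relation expressing $\tau^*\iota_*[Z]$ (where $\iota:Z\hookrightarrow X$) as a linear combination of $j_*(h^i)$ with top term $(-1)^{r-1}j_*(h^{r-1})$, this exhibits the $h^{r-1}$ component as redundant modulo the image of $\tau^*$, and so forces the displayed map to be an isomorphism. Constructing an explicit left inverse using $\tau_*$ together with the Leray--Hirsch decomposition on $E$ confirms that the splitting is as stated, and naturality of $\tau^*$, $j_*$, and cup product with $h$ ensures compatibility with the ring structure.
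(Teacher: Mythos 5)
The paper does not prove this statement at all: it is imported as \cite[Theorem 7.31]{Voisin}, so there is no in-paper argument to compare against. Your sketch is, in substance, the standard proof from that source (and from Griffiths--Harris): Leray--Hirsch for $E=\P(N_{Z\subset X})$, the isomorphism $H^*(\tilde{X}_Z,E;\Z)\cong H^*(X,Z;\Z)$ obtained by retracting onto tubular neighborhoods and then excising (it is not literal excision, but this is routine), the self-intersection relation $j^*j_*=\cup(-h)$ coming from $N_{E\subset\tilde{X}_Z}\cong\O_E(-1)$, and the ``key formula'' expressing $\tau^*\iota_*$ (with $\iota:Z\hookrightarrow X$ the inclusion) through $j_*$ of a polynomial in $h$ with top term $\pm h^{r-1}$, which is exactly what absorbs the missing Leray--Hirsch summand into the image of $\tau^*$. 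All of these ingredients are correct and correctly assembled in outline.

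The one step that is too quick as written is the ``five-lemma style argument.'' In the ladder of long exact sequences the vertical map $\tau|_E^*:H^*(Z;\Z)\to H^*(E;\Z)$ is injective but far from surjective, so the four/five lemma does not by itself give injectivity of $\tau^*$ or identify its cokernel: the diagram chase stalls at the connecting homomorphism applied to the complementary summands $\bigoplus_{i\ge 1}h^i\,\tau|_E^*H^{*-2i}(Z;\Z)$. This is repaired precisely by the device you mention only at the end: injectivity of the displayed map follows from the explicit left inverse built from $\tau_*$, the projection formula, and the vanishing $\tau|_{E*}(h^i)=0$ for $i<r-1$ (making the relevant matrix triangular with invertible diagonal entries), after which surjectivity follows from the ladder together with $j^*j_*=\cup(-h)$, which lets you correct any class of $H^p(\tilde{X}_Z;\Z)$ by elements of the image until its restriction to $E$ lies in $\mathrm{im}(\tau|_E^*)$. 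With the steps taken in that order the argument is complete and agrees with the proof in the cited reference.
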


Some remarks:
\begin{itemize}
	\item The theorem is also valid for cohomology groups with $\C$ coefficients. In this paper, we always use complex coefficients, thus we will write $H^p(X)$ for $H^p(X;\C)$ from now on.
	\item We denote $H^*(X)=\oplus_p H^p(X)$ as a graded complex vector space. Moreover, define $H^p_q(X):=H^{p+q}(X)$, i.e. the degree $p$ part of $H_q^*(X)$ is equal to the degree $p+q$ part of $H^*(X)$. The theorem can be written in term of the graded $\C$ vector spaces as
\[
H^*(\tilde{X}_Z)\cong H^*(X)\oplus\left(\bigoplus_{i=1}^{r-1} H^*_{-2i}(Z)\right).
\]
\end{itemize}

In the process of obtaining $\cX_k$, we blowup strict transforms of coordinate linear subvarieties $\l_\alpha$. The strict transforms $\overline{\l}_\alpha$ are themselves permutohedral varieties of lower dimension. We will use $\cX^d$ to denote the permutohedral variety of dimension $d$. A more compact form of the following description is in \cite{Procesi}.

\begin{thm}
\label{thm:cohomology_preperm}
The cohomology, as a graded complex vector space, of the pre-permutohedral variety $\cX_k^{n-1}$, $1\le k \le n-2$, is given by
\begin{equation}
\label{eq:cohomology_preperm}
H^*(\cX_k^{n-1}) \cong H^*(\P^{n-1})\oplus\left(
\bigoplus_{j=1}^k \bigoplus_{\stackrel{\alpha\subset[n],}{|\alpha|=j}} \bigoplus_{i=1}^{n-j-1} H^*_{-2i}(\cX^{j-1})\right).
\end{equation}
\end{thm}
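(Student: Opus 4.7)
The plan is to prove the identity by induction on $k$, using the blowup formula (Theorem~\ref{thm:blowup}) at each stage.

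For the base case $k=1$, the variety $\cX_1$ is obtained from $\cX_0=\P^{n-1}$ by blowing up the $n$ disjoint points $\l_1,\ldots,\l_n$, each a copy of $\cX^0$ and each of codimension $n-1$. Performing these blowups one at a time and invoking Theorem~\ref{thm:blowup} once for each point yields
$$H^*(\cX_1) \cong H^*(\P^{n-1}) \oplus \bigoplus_{\substack{\alpha\subset[n]\\ |\alpha|=1}} \bigoplus_{i=1}^{n-2} H^*_{-2i}(\cX^0),$$
which is~(\ref{eq:cohomology_preperm}) at $k=1$.

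For the inductive step, assume the formula holds for $\cX_{k-1}$. By construction (Section~\ref{section:blowups}), the map $\pi_k:\cX_k\to\cX_{k-1}$ blows up the strict transforms $\overline{\l}_\alpha$ for all $\alpha\subset[n]$ with $|\alpha|=k$. These strict transforms are pairwise disjoint smooth subvarieties of $\cX_{k-1}$, each isomorphic to the permutohedral variety $\cX^{k-1}$ and each of codimension $n-k$. Hence I may blow them up one at a time (after each intermediate blowup the remaining centers remain smooth and pairwise disjoint, and disjoint from the new exceptional divisor), applying Theorem~\ref{thm:blowup} once per $\alpha$ to obtain
$$H^*(\cX_k) \cong H^*(\cX_{k-1}) \oplus \bigoplus_{\substack{\alpha\subset[n]\\ |\alpha|=k}} \bigoplus_{i=1}^{n-k-1} H^*_{-2i}(\cX^{k-1}).$$
Substituting the inductive description of $H^*(\cX_{k-1})$ and grouping the new summands with the existing ones produces exactly the right hand side of~(\ref{eq:cohomology_preperm}).

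The only real point to verify is that the hypotheses of Theorem~\ref{thm:blowup} remain satisfied through each intermediate blowup within a single level: ambient and center must be smooth K\"ahler manifolds, the codimension must be as claimed, and the remaining centers must stay disjoint. Smoothness is automatic since $\cX_{k-1}$ and each $\overline{\l}_\alpha\cong\cX^{k-1}$ are smooth projective toric varieties; the codimension is uniformly $n-k$ for all $|\alpha|=k$; and pairwise disjointness in $\cX_{k-1}$ was already established in Section~\ref{section:blowups} (distinct $\l_\alpha$, $\l_\beta$ with $|\alpha|=|\beta|=k$ meet along some $\l_\gamma$ with $|\gamma|<k$, which was blown up at an earlier level). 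Because the statement is only an isomorphism of graded vector spaces, no further information about the normal bundles or Gysin maps is required beyond the additive decomposition provided by Theorem~\ref{thm:blowup}.
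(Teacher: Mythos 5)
Your proposal is correct and follows essentially the same route as the paper: induct on $k$ and apply the blowup formula of Theorem~\ref{thm:blowup} to the centers $\overline{\l}_\alpha\cong\cX^{k-1}$ of codimension $n-k$, using the disjointness of the strict transforms (already established in Section~\ref{section:blowups}) to handle all $\alpha$ with $|\alpha|=k$ simultaneously. The paper's proof is just a more compressed version of your argument, taking the disjointness and the one-center-at-a-time reduction as understood.
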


\begin{proof}
	We apply Theorem~\ref{thm:blowup} with our setting:
\begin{itemize}
	\item $\cX_0^{n-1}=\P^{n-1}$.
	\item For $1\le k\le n-2$, $\cX_k^{n-1}$ is the blowup of $\cX_{k-1}^{n-1}$ along all $\overline{\l}_\alpha$ with $\alpha\subset [n]$ and $|\alpha|=k$.
	\item $\overline{\l}_\alpha$ is isomorphic to $\cX^{k-1}$, thus the codimension of $\overline{\l}_\alpha$ is $n-k$.
\end{itemize}
This gives us 
	\[
	H^*(\cX_k^{n-1}) \cong H^*(\cX_{k-1}^{n-1})\oplus\left(
    \bigoplus_{\stackrel{\alpha\subset[n],}{|\alpha|=k}} \bigoplus_{i=1}^{n-k-1} H^*_{-2i}(\overline{\l}_\alpha)\right)
    \cong H^*(\cX_{k-1}^{n-1})\oplus\left(
    \bigoplus_{\stackrel{\alpha\subset[n],}{|\alpha|=k}} \bigoplus_{i=1}^{n-k-1} H^*_{-2i}(\cX^{k-1})\right).
	\]
The isomorphism in the theorem can be obtained by applying the above isomorphism inductively on $k$. 
\end{proof}

In the following, we compute some examples in low dimensions.

\begin{ex}
	For the base case, $H^*(\cX_0^{n-1}) \cong H^*(\P^{n-1}) \cong \C[\xi]/(\xi^n)$ where $\xi\in H^2(\P^{n-1})$ is the first Chern class of the hyperplane bundle. Next, let us consider $\cX_1^{n-1}$. This is the space obtained in the first step of the blowups. Here, we blowup the points $Z_1,\cdots,Z_n$. The cohonology groups become
	
	\begin{align*}
		H^0(\cX_1^{n-1}) & \cong H^0(\P^{n-1}) \cong \C\\
		H^{2j}(\cX_1^{n-1}) & \cong H^0(\P^{n-1}) \oplus \bigoplus_{i=1}^n H^0(Z_i) \cong \C\oplus\C^{n}, 
		         \text{ $1\le j\le n-2$} \\ 
		H^{2n-2}(\cX_1^{n-1}) & \cong H^{2n-2}(\P^{n-1}) \cong \C. \\
	\end{align*}
In particular, if $n=3$, then $\cX_1^2$ is the blowup of $\P^2$ at the three points $[1:0:0], [0:1:0]$ and $[0:0:1]$. $H^0(\cX_1^2)\cong\C$; $H^1(\cX_1^2)\cong\C\oplus\C^2$ and $H^2(\cX_1^2)\cong\C$.
\end{ex}

\begin{ex}
We investigate the cohomology of $\cX^{n-1}_2$ next. in order to obtain the space, we blowup all the lines $\l_{\{i,j\}}$ where $\{i,j\}\subset [n]$ and $i\ne j$. Notice $\l_{\{i,j\}}$ is isomorphic to its strict transform $\overline{\l}_{\{i,j\}}$ since its dimension is one. We have $H^0(\l_{\{i,j\}})\cong\C$ and $H^2(\l_{\{i,j\}})\cong\C$. Therefore, by Theorem~\ref{thm:cohomology_preperm}, we have 
	\begin{align*}
	H^0(\cX_2^{n-1}) & \cong H^0(\P^{n-1}) \cong \C\\
	H^2(\cX_2^{n-1}) & \cong H^0(\P^{n-1}) \oplus\bigoplus_{\stackrel{i,j \in [n],}{i\ne j}} H^0(\l_{\{i,j\}}) \cong \C \oplus \C^{\binom{n}{2}} \\
	H^{2j}(\cX_2^{n-1}) & \cong H^0(\P^{n-1}) \oplus \bigoplus_{i=1}^n H^0(Z_i) 
	\oplus\bigoplus_{\stackrel{i,j \in [n],}{i\ne j}} \left( H^0(\l_{\{i,j\}}) \oplus H^0(\l_{\{i,j\}})\right) \\
	\mbox{}& \cong \C \oplus\C^{n} \oplus \C^{\binom{n}{2}} \oplus \C^{\binom{n}{2}},\ \ \ 
	\text{ $2\le j\le n-3$}. \\ 
	H^{2n-4}(\cX_2^{n-1}) & \cong H^0(\P^{n-1}) \oplus\bigoplus_{\stackrel{i,j \in [n],}{i\ne j}} H^2(\l_{\{i,j\}}) \cong \C \oplus \C^{\binom{n}{2}} \\
	H^{2n-2}(\cX_2^{n-1}) & \cong H^{2n-2}(\P^{n-1}) \cong \C. \\
\end{align*}

\end{ex}

%
%
%

\subsection{Encoding the cohomology groups and the $\fS_n$ representation}
There is a nice way to encode the one-dimensional components of the cohomology groups of the permutohedral variety by the codes defined by Stembridge~\cite{Ste}. We recall the notations first. For a sequence $\ba = (a_1,\cdots,a_n)$ of nonnegative integers, we will call $n$ the {\em length} of $\ba$. Let $S^+(\ba)=\{a_i|1\le i\le n,a_i>0\}$ denote the set of positive integers in $\ba$. For a positive integer $k$, the sequence $\ba$ is called {\em $k$-admissible} if $S^+(\ba)=\{1,\cdots,k\}$; $\ba$ is {\em $0$-admissible} if $S^+(\ba)=\emptyset$, i.e. $\ba$ consists of all $0$'s. The sequence $\ba$ is called {\em admissible} if it is $k$-admissible for some $k\ge 0$.

Let $m_j(\ba)$ denote the number of occurrences of the integer $j$ in the sequence $\ba$. A {\em marked sequence} is a pair $(\ba, f)$ where $\ba$ is a sequence of nonnegative integers and $f:S^+(\ba)\to \N$ be a map such that $1\le f(j) < m_j(\ba)$ for all $j\in S^+(\ba)$. 
We will adapt the notation in~\cite{Ste} and represent a marked sequence $(\ba, f)$ by putting a hat notation on top of $j$ at the $[f(j)+1]$-st occurrence of $j$. The {\em index} of a marked sequence is defined as $\ind(\ba,f):=\sum_{j\in S^+(\ba)} f(j)$.

A {\em code} is defined to be an admissible marked sequence, i.e. a marked sequence $(\ba,f)$ such that $\ba$ is admissible.
There is one code $((0,\cdots,0),\emptyset)$ consists of all $0$'s and the empty function as the marking function. We define the index of this code to be $0$.

For a code $\ba$, let $\max(\ba)=\max\{a_i|i=1,\cdots,n\}$ be the maximum number in $\ba$, and let $\mu(\ba):=m_{\max(\ba)}(\ba)$ be the number of occurrences of $\max(\ba)$ in $\ba$. For example, if $\ba=1201\hat{2}\hat{1}2$, then $\max(\ba)=2$ and $\mu(\ba)=3$. We further define $\ba'$ to be the sequence obtained after removing all the $\max(\ba)$ from $\ba$, but keep everything else unchanged. For the example $\ba=1201\hat{2}\hat{1}2$ above, one would get $\ba'= 101\hat{1}$. It is possible to have $\ba'$ equals to the empty sequence, in which case $\ba$ consists all $0$'s or all $1$'s. Notice that if $\ba$ is a code, then $\ba'$ is either a code or an empty sequence. 
 
Recall, from Theorem~\ref{thm:cohomology_preperm}, that we have the decomposition of the cohomology of the permutohedral variety, by setting $k=n-2$ in (\ref{eq:cohomology_preperm}), as follows.
\begin{equation}
	\label{eq:cohomology_permutohedral_var}
	H^*(\cX^{n-1}) \cong H^*(\P^{n-1})\oplus\left(
	\bigoplus_{j=1}^{n-2} \bigoplus_{\stackrel{\alpha\subset[n],}{|\alpha|=j}} \bigoplus_{i=1}^{n-j-1} H^*_{-2i}(\overline{\l}_{\alpha})\right).
\end{equation} 
 The main result of this section is the following.
 
\begin{prop}
	\label{prop:encoding_cohomology}
Let $n\ge 2$ be an integer. There is a natural one-to-one correspondence between codes of length $n$ and one-dimensional components of $H^*(\cX^{n-1})$.
\end{prop}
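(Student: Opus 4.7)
The plan is induction on $n$, exploiting the parallel between Stembridge's reduction $\ba \mapsto \ba'$ (removing all occurrences of $\max(\ba)$) and the recursive decomposition (\ref{eq:cohomology_permutohedral_var}), under which the summands of $H^*(\cX^{n-1})$ beyond $H^*(\P^{n-1})$ take the form $H^*_{-2i}(\overline{\l}_\alpha)$ with $\overline{\l}_\alpha \cong \cX^{|\alpha|-1}$. The base cases $n=1,2$ are immediate; for instance when $n=2$, the two codes $(0,0)$ and $(1,\hat{1})$ pair with $1 \in H^0(\P^1)$ and $\xi \in H^2(\P^1)$.

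For the inductive step, I partition codes of length $n$ into three classes and define the correspondence $\Phi$ on each. The all-zero code $0^n$ is sent to the generator $1 \in H^0(\P^{n-1}) \subseteq H^*(\cX^{n-1})$. For $1 \le i \le n-1$, the constant-$1$ code with $f(1) = i$ is sent to $\xi^i \in H^{2i}(\P^{n-1})$; these two classes together exhaust the $n$ components contributed by $H^*(\P^{n-1})$ in (\ref{eq:cohomology_permutohedral_var}). For every other code $\ba$, write $k := \max(\ba) \ge 1$ and $m := \mu(\ba)$ with $2 \le m \le n-1$; set $\alpha := [n] \setminus \{\ell : a_\ell = k\}$, so that $|\alpha| = n - m =: j \in \{1,\ldots,n-2\}$, and $i := f(k) \in \{1,\ldots,n-j-1\}$. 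The reduced code $\ba'$, of length $j$, picks out by the inductive hypothesis applied to $\overline{\l}_\alpha \cong \cX^{j-1}$ a one-dimensional basis component $c_{\ba'}$ of $H^*(\cX^{j-1})$; set $\Phi(\ba) := c_{\ba'}$ viewed inside the copy of $H^*_{-2i}(\overline{\l}_\alpha)$ indexed by $\alpha$ in (\ref{eq:cohomology_permutohedral_var}).

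The verification that $\Phi$ is a bijection is a straightforward bookkeeping check. The three classes are disjoint and exhaustive, and match respectively the $H^0(\P^{n-1})$ piece, the higher $\xi^i$ pieces, and the $\bigoplus_{j,\alpha,i}$ blocks of (\ref{eq:cohomology_permutohedral_var}); in Class III the triple $(\alpha, i, \ba')$ is visibly recoverable from $\Phi(\ba)$, yielding the explicit inverse (place $k := \max(\ba')+1$, or $k=1$ if $\ba'$ is zero, at each position of $[n]\setminus\alpha$, mark the $(i+1)$-st such occurrence, and interleave $\ba'$ on $\alpha$). Cohomological grading is preserved because $\ind(\ba) = f(k) + \ind(\ba') = i + \ind(\ba')$ and the shift in $H^*_{-2i}$ contributes exactly $2i$ to the degree. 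The main obstacle is purely combinatorial bookkeeping: one must take the summand indexed by $\alpha$ to correspond to the \emph{non-maximum} positions of $\ba$, because $\overline{\l}_\alpha$ is the strict transform of the linear subspace spanned by $\{\l_\ell : \ell \in \alpha\}$ and the inherited smaller permutohedral variety lives on those coordinates. Beyond this indexing convention and the degree-shift tracking, the argument needs no further input past Theorem~\ref{thm:cohomology_preperm} and Stembridge's definitions; compatibility with the $\fS_n$-action is then manifest from the construction, since permuting coordinates permutes both the positions of $\ba$ and the subsets $\alpha$ equivariantly.
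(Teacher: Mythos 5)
Your proposal is correct and follows essentially the same route as the paper: an induction on $n$ that matches Stembridge's reduction $\ba\mapsto\ba'$ against the blowup decomposition (\ref{eq:cohomology_permutohedral_var}), using the identical indices $j=n-\mu(\ba)$, $\alpha=\{\ell\mid a_\ell<\max(\ba)\}$, and $i=f(\max(\ba))$, with bijectivity from recoverability of $\ba$ from $(\alpha,i,\ba')$. Your only departures are cosmetic — splitting the paper's single base case $\mu(\ba)=n$ into the all-zero and all-one classes, and spelling out the inverse and the degree bookkeeping $\ind(\ba)=i+\ind(\ba')$ explicitly.
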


\begin{proof}	
We will construct the correspondence inductively. The base case is a code $\ba$ with $\mu(\ba)=n$. That means, $\ba$ consists of either all $0$'s or all $1$'s. In this case, suppose $\ind(\ba,f)=i$, we assign $(\ba,f)$ to the $2i$-th cohomology of $H^*(\P^{n-1})$ in the above decomposition.	

Next, suppose that $\mu(\ba)<n$. Then we must have $\max(\ba)>0$ and thus $2\le\mu(\ba)\le n-1$. We first set the integers $i,j$ and the subset $\alpha\subset[n]$ that are used as indices in the above decomposition. 
\begin{itemize}
	\item $j=n-\mu(\ba)$,
	\item $\alpha=\left\{i\, |\, a_i<\max(\ba)\right\}$,
	\item $i=f(\max(\ba))$.
\end{itemize}
Notice that, since $\max(\ba)>0$, $i=f(\max(\ba))$ is well-defined; moreover, we have $1\le i\le \mu(\ba)-1 = n-j-1$ by the definition of $f$. Concretely, what these indices record are the following:
\begin{itemize}
	\item $\mu(\ba)$ record the codimension of the subspace which is blown up in the process of constructing $\cX^{n-1}$.
	\item $\overline{\l}_{\alpha}$ is the subspace getting blown up in the process of constructing $\cX^{n-1}$. Equivalently, the set
	\[
	[n]\setminus\alpha=\left\{i\, |\, a_i=\max(\ba)\right\}
	\] 
	encodes the coordinates of the subspace that are set to be $0$. For example, suppose $n=6$ and $\ba=12\hat{1}01\hat{2}$, then $\mu(\ba)=2$, $\alpha=\{1,3,4,5\}$, and the subspace getting blown up is defined by the equations $z_2=z_6=0$ in $\P^5$ (then taking the strict transform to the proper space).  
	\item $i=f(\max(\ba))$ denotes the shifting of degrees when we map $H^*(\overline{\l}_{\alpha})$ into $H^*(\cX^{n-1})$.
\end{itemize}
Next, $\ba'$ is a code of length $j=n-\mu(\ba)$. Therefore, by induction hypothesis, it corresponds to a unique component in $H^*_{-2i}(\overline{\l}_{\alpha})\cong H^*_{-2i}(\cX^{j-1})$. 

Finally, since one can recover $\ba$ uniquely from $\ba'$, $\alpha$, and $i$, this correspondence is one-to-one. One can also observe that the ranges of $j$ and $i$ we defined from codes of length $n$ are the same as the range for the indices in the decomposition (\ref{eq:cohomology_permutohedral_var}). Therefore the correspondence is surjective.   
\end{proof}

Recall that, for integers $n\ge 2$ and $0\le k\le n-2$, the prepermutohedral variety $\cX^{n-1}_k$ is obtained from $\cX^{n-1}_0=\P^{n-1}$ by blowing up points, lines, ..., all the way to the coordinate subspaces of dimension $k-1$. Since $\mu(\ba)$ records the codimension of the subspace getting blown up, one can conclude the following coding for the cohomologies of $\cX^{n-1}_k$ as well.  

\begin{cor}
	For integers $n\ge 2$ and $0\le k\le n-2$, there is a natural one-to-one correspondence between one-dimensional components of $H^*(\cX^{n-1}_k)$ and the set
	of codes $\ba$ of length $n$ such that $\mu(\ba)\ge n-k$.
\end{cor}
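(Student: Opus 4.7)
The plan is to recycle the bijection built in Proposition~\ref{prop:encoding_cohomology} and inspect which codes are picked out by the truncated decomposition of $H^*(\cX^{n-1}_k)$. By Theorem~\ref{thm:cohomology_preperm},
\[
H^*(\cX_k^{n-1}) \cong H^*(\P^{n-1})\oplus\left(\bigoplus_{j=1}^{k} \bigoplus_{\stackrel{\alpha\subset[n],}{|\alpha|=j}} \bigoplus_{i=1}^{n-j-1} H^*_{-2i}(\cX^{j-1})\right),
\]
which is identical to the full permutohedral decomposition~(\ref{eq:cohomology_permutohedral_var}) except that the outer sum over $j$ is truncated at $k$ rather than $n-2$. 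So the one-dimensional components on the left-hand side are precisely those components (under the bijection of Proposition~\ref{prop:encoding_cohomology}) whose summand index $j$ satisfies $j\le k$, together with the entire $H^*(\P^{n-1})$ piece.

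Next, I would revisit the assignment in the proof of Proposition~\ref{prop:encoding_cohomology}: a code $\ba$ with $\mu(\ba)=n$ is sent to a component of the $H^*(\P^{n-1})$ summand, while a code with $\mu(\ba)<n$ is sent recursively to a component of $H^*_{-2i}(\cX^{j-1})$ with $j=n-\mu(\ba)$, $\alpha=\{r\mid a_r<\max(\ba)\}$, and $i=f(\max(\ba))$. Thus the summand index $j$ is controlled by exactly one statistic of the code, namely $j=n-\mu(\ba)$. Restricting the bijection to codes with $\mu(\ba)\ge n-k$ is therefore equivalent to restricting either to the base case $\mu(\ba)=n$ (which lands in the $H^*(\P^{n-1})$ summand and is trivially within the allowed range) or to codes with $j=n-\mu(\ba)\le k$ (which land in one of the remaining summands appearing in the truncated decomposition). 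Since the recursive input $\ba'$ is a code of length $j\le k< n$, the inductive hypothesis applies in the appropriate smaller dimension and uniquely identifies the target one-dimensional component in $H^*_{-2i}(\cX^{j-1})$.

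The main point to verify — the only ``obstacle'' worth highlighting — is that the combinatorial statistic $\mu(\ba)$ is truly tracking the codimension $n-j$ of the coordinate subspace whose blowup produces the summand containing the corresponding component. This is built into the definition $j=n-\mu(\ba)$ already used in the proof of Proposition~\ref{prop:encoding_cohomology}, so no new verification is needed: on the geometric side, truncating the blowup tower to codimension $\ge n-k$ corresponds on the combinatorial side precisely to the cutoff $\mu(\ba)\ge n-k$, giving both surjectivity onto and injectivity into the set of one-dimensional components of $H^*(\cX^{n-1}_k)$.
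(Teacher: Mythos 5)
Your argument is correct and matches the paper's reasoning: the paper deduces the corollary in one line from the observation that $\mu(\ba)$ records the codimension $n-j$ of the blown-up subspace, so truncating the decomposition of Theorem~\ref{thm:cohomology_preperm} at $j\le k$ corresponds exactly to the cutoff $\mu(\ba)\ge n-k$ under the bijection of Proposition~\ref{prop:encoding_cohomology}. Your only addition is to spell out that the recursive step for $\ba'$ invokes the full (unrestricted) correspondence for the lower-dimensional permutohedral variety $\cX^{j-1}$, which is a correct and worthwhile clarification but not a different approach.
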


\begin{ex}
		For $n=4$, we can read the cohomologies of $\cX^3_k$, $k=0,1,2$, and the corresponding codes, from the following table:	
	\begin{center}	{
			\[
			\setstretch{1.5}
			\begin{array}{|c|cc|cc|cc|}
				\hline
				\mbox{} & \multicolumn{2}{|c|}{\cX^3_0} & \multicolumn{2}{|c|}{\cX^3_1} & \multicolumn{2}{|c|}{\cX^3_2} \\ \hline
				& &\ \text{codes}
				&&\ \text{codes}   &&\ \text{codes} \\ \hline
				H^0(\cX^3_\bullet) \cong  & H^0(\P^3)& 0000
				&   & &&\\ \hline
				H^2(\cX^3_\bullet) \cong  & H^2(\P^3)& 1\hat{1}11
				& \oplus_{j=1}^4 H^0(\l_j) & 01\hat{1}1
				& \oplus_{\stackrel{i,j \in [n],}{i\ne j}} H^0(\l_{\{i,j\}}) & 001\hat{1} \\ \hline
				H^4(\cX^3_\bullet) \cong  & H^4(\P^3) & 11\hat{1}1
				& \oplus_{j=1}^4 H^0(\l_j) & 011\hat{1}
				& \oplus_{\stackrel{i,j \in [n],}{i\ne j}} H^2(\l_{\{i,j\}}) & 1\hat{1}2\hat{2}
				\\ \hline
				H^6(\cX^3_\bullet) \cong  & H^6(\P^3)& 111\hat{1}
				&  & &&\\ \hline
			\end{array}
			\]
		}
	\end{center}
	Entries in the table record the direct summands needed to obtain the cohomology of the corresponding spaces. For example, one can read from the table that  
	$H^2(\cX^3_1) \cong H^2(\P^3) \oplus_{j=1}^4 H^0(\l_j)$ and 
	$H^4(\cX^3_2) \cong   H^4(\P^3)
	\oplus_{j=1}^4 H^0(\l_j) 
	\oplus_{\stackrel{i,j \in [n],}{i\ne j}} H^2(\l_{\{i,j\}})$. Notice that $\l_j$ here are points and $\l_{\{i,j\}}$ are lines.
	The codes recorded here are the representatives for all those in the direct sum (i.e. in an $\fS_4$ orbit). For instance, the code $01\hat{1}1$ represents the four codes 
	$01\hat{1}1, 10\hat{1}1, 1\hat{1}01$, and $1\hat{1}10$. We pick the representative to be the one that is increasing in numbers.
\end{ex}

The convenience of using the codes to encode components of cohomology comes in two ways. First, it give a nice compact way to write $H^*(\cX^{n-1}_k)$. Let $\Code(n)$ denote the set of all codes of length $n$, then
\[
H^*(\cX^{n-1}_k) = \bigoplus_{\stackrel{\ba\in\Code(n)}{\mu(\ba)\ge n-k}}  \C_\ba.
\]
Second, the $\fS_n$-representation on $H^*(\cX^{n-1}_k)$ is compatible with the $\fS_n$ action on $\Code(n)$, in the sense that for any $w\in\fS_n$, we have $w\, \C_\ba=\C_{w\ba}$ in the above decomposition of $H^*(\cX^{n-1}_k)$. We explain this assertion in the next paragraph.

For $w\in\fS_n$, the action of $\fS_n$ on $\Code(n)$ is given by $w\cdot(\ba,f) = (w\ba,f)$ where $(w\ba)_i= a_{w^{-1}(i)}$, $1\le i\le n$. The action on $\cX^{n-1}_k$ is induced from the action on $\P^{n-1}$ given by permuting the coordinates, i.e.
$ w\cdot [z_1:\cdots:z_n] = [z_{w(1)}:\cdots : z_{w(n)}] $.
Then the action on the cohomology is given by the pullback of the action on $\cX^{n-1}_k$. For the coordinate hyperplane $(z_i=0)$, its pullbak is $(z_{w^{-1}(i)}=0)$. Recall that, in the encoding, the locations of the maximal numbers corresponds to coordinates to set to be $0$. Moreover, permuting the coordinates also results in permuting the normal directions to the coordinate linear subspaces, and the exceptional divisor corresponds to the projective bundle associated to the normal bundle. Hence we know that $w$ maps $H^*(\overline{\l}_{\alpha})$ to $H^*(\overline{\l}_{w^{-1}\alpha})$. Then, we can conclude $w\, \C_\ba=\C_{w\ba}$ by induction on $n$.

Therefore, the $\fS_n$ representation on $\cX^{n-1}_k$ is the permutation representation induced by the permutation on codes of length $n$ with $\mu(\ba)\ge n-k$. An immediate consequence of this fact is the following.  

\begin{prop}\label{Thm:permutaion_basis}
	There is a permutation basis for the dot representation on the cohomology of the prepermutohedral varieties $\cX_{k}^{n-1}$, $1\le k\le n-2$. 
\end{prop}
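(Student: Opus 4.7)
The plan is to read the proposition as an immediate consequence of the setup developed just above it, namely the encoding of the one-dimensional components of $H^*(\cX_k^{n-1})$ by codes of length $n$ with $\mu(\ba)\ge n-k$, together with $\fS_n$-equivariance of that encoding. So the proof amounts to collecting and verifying the assertions made informally in the paragraph preceding the proposition.

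First I would fix the basis. By the Corollary to Proposition~\ref{prop:encoding_cohomology}, the vector space $H^*(\cX_k^{n-1})$ decomposes as
\[
H^*(\cX_k^{n-1})=\bigoplus_{\substack{\ba\in\Code(n)\\ \mu(\ba)\ge n-k}} \C_\ba,
\]
and choosing a nonzero vector in each one-dimensional summand $\C_\ba$ gives a vector-space basis $\{v_\ba\}$ indexed by the set $\Code_k(n):=\{\ba\in\Code(n)\mid \mu(\ba)\ge n-k\}$. The claim to prove is that the dot action of $\fS_n$ permutes the lines $\C_\ba$, i.e.\ $w\cdot \C_\ba=\C_{w\ba}$; once this is known, rescaling the $v_\ba$ within each $\fS_n$-orbit yields a genuine permutation basis.

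Second I would verify the equivariance $w\cdot \C_\ba=\C_{w\ba}$ by induction on $n$, paralleling the inductive construction of the encoding in the proof of Proposition~\ref{prop:encoding_cohomology}. The base case is that $\fS_n$ acts on $H^*(\P^{n-1})$ trivially on each degree, corresponding to the codes consisting of all $0$'s or all $1$'s, whose $\fS_n$-orbits are singletons. For the inductive step, recall that the decomposition (\ref{eq:cohomology_preperm}) is built from the blowup formula (Theorem~\ref{thm:blowup}) applied iteratively to the blowups along the strict transforms $\overline{\l}_\alpha$. The $\fS_n$-action on $\P^{n-1}$ by coordinate permutation lifts canonically to each $\cX_k^{n-1}$ because $w$ sends the ideal sheaf of $\l_\alpha$ to the ideal sheaf of $\l_{w\alpha}$, hence lifts through each blowup step and sends $\overline{\l}_\alpha$ to $\overline{\l}_{w\alpha}$. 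Consequently the induced map on cohomology sends the summand
$H^*_{-2i}(\overline{\l}_\alpha)$ to $H^*_{-2i}(\overline{\l}_{w\alpha})$. Under the encoding, a code $\ba$ with $\mu(\ba)<n$ is matched to a summand indexed by $\alpha=\{i:a_i<\max(\ba)\}$, shift $i=f(\max(\ba))$, and the residual code $\ba'$ of length $n-\mu(\ba)$ inside $H^*(\overline{\l}_\alpha)\cong H^*(\cX^{|\alpha|-1})$; applying $w$ replaces $\alpha$ by $w\alpha$ and $\ba'$ by $(w\ba)'$, which matches precisely the data of the code $w\ba$. The inductive hypothesis applied to the smaller permutohedral variety $\cX^{|\alpha|-1}$ (of dimension strictly less than $n-1$) then completes the step.

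The main obstacle is making the last paragraph precise, specifically checking that the blowup decomposition (\ref{eq:cohomology_preperm}) is canonical enough that the $\fS_n$-action really does decompose block-diagonally along the sum and acts on each block as $w:H^*_{-2i}(\overline{\l}_\alpha)\to H^*_{-2i}(\overline{\l}_{w\alpha})$. This follows from the naturality of Theorem~\ref{thm:blowup}: the blowup of $\cX_{j-1}^{n-1}$ along $\bigsqcup_{|\alpha|=j}\overline{\l}_\alpha$ is equivariant for $\fS_n$ because the center is an $\fS_n$-invariant subscheme, the exceptional divisor $E=\bigsqcup E_\alpha$ is permuted as $w(E_\alpha)=E_{w\alpha}$, and the pushforward, pullback, and cup product with $c_1(\O_E(1))$ appearing in the isomorphism are all natural with respect to the equivariant maps involved. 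Given this naturality, the inductive argument closes and the proposition follows.
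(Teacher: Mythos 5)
Your proposal is correct and follows essentially the same route as the paper: the paper's proof likewise rests on the decomposition $H^*(\cX^{n-1}_k)=\bigoplus_{\ba}\C_\ba$ indexed by codes with $\mu(\ba)\ge n-k$ and on the equivariance $w\,\C_\ba=\C_{w\ba}$ (which the paper establishes, as you do, by induction on $n$ via the $\fS_n$-equivariance of the iterated blowup and of the decomposition of Theorem~\ref{thm:blowup}), and then picks a representative vector in each orbit and takes its $\fS_n$-translates. The only difference is organizational: the paper places the equivariance argument in the paragraph preceding the proposition and keeps the proof itself to the choice of orbit representatives, whereas you fold that verification into the proof.
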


\begin{proof}
For each $\fS_n$ orbit of codes under the encoding, pick a representative $\ba$ and pick an element $\xi\in\C_{\ba}$. Setting all other components to be $0$ gives us an element in $H^*(\cX_k)$. Then the $w\xi$'s, as $w$ runs through $\fS_n$, form a linearly independent set. As we pick these elements for all orbits, we obtain a permutation basis.   
\end{proof}

\subsection{The characteristic series for the dot action on prepermutohedral varieties}

We follow the notation in \cite{Procesi} and denote
\[
A_{n-1}(t) = \sum_{j=0}^{n-1} \ch H^{2j} (\cX^{n-1}) t^j,
\]
where $H^{2j} (\cX^{n-1})$ denotes the $\fS_n$-representation, and `$\ch$' is the Frobenius characteristic map. In \cite{Procesi}, Procesi observed the following recursive formula 
\[
A_{n-1}(t) = s_n \sum_{i=0}^{n-1} t^i + \sum_{i=0}^{n-3} s_{n-1-i} A_i(t) \left(\sum_{l=1}^{n-i-2} t^l\right)
\] 
from the iterated blowup structure of $\cX^{n-1}$. Here $s_n$ and $s_{n-1-i}$ are the Schur symmetric functions. The first term in the recursive relation corresponds to the representation on base space $\P^{n-1}$, and for each $i=0,\cdots, n-3$, the term $s_{n-1-i} A_i(t) \sum_{l=1}^{n-i-2} t^l$ corresponds to the representation on the blowup of $i$-dimensional coordinate space.   

For a prepermutohedral variety $\cX^{n-1}_k$, we denote
\[
A_{n-1,k}(t) := \sum_{j=0}^{n-1} \ch H^{2j} (\cX^{n-1}_k) t^j.
\]
Since we also have the iterated blowup structure for $\cX^{n-1}_k$, we have a similar recursive formula for $A_{n-1,k}(t)$, with the second summation only goes from $0$ to $k-1$.
With the more compact notation $[n]_t=\sum_{i=0}^{n-1} t^i$, and with the identity $s_j=h_j$ (the complete homogeneous symmetric functions), the recursive formula can be written as
\begin{equation}
	\label{recursive_A}
A_{n-1,k}(t) = h_n [n]_t + \sum_{i=0}^{k-1} h_{n-1-i} A_i(t) t [n-i-2]_t.
\end{equation}
Notice that for $k=0$, the second term disappears. This recursive formula is sufficient for our purpose, but it might be interesting to derive a close form for $A_{n-1,k}(t)$.

\section{The geometry of certain Hessenberg varieties} In this section, we consider Hessenberg varieties associated with Hessenberg functions of the type $h_k=(2,3,\cdots, k+1, n,\cdots,n)$, for some $k\le n-3$. That is,
\[
h_k(j)=\begin{cases}
	j+1, &  j=1,\cdots,k\\
	n,   &  j=k+1,\cdots,n.
\end{cases}
\]
We denote the corresponding Hessenberg variety $\cY=\Hess(\SS,h_k)$. It is a smooth complex variety of dimension $k+ \frac{1}{2}{(n-k)(n-k-1)}$. There is a morphism from $\cY$ to the Hessenberg type variety $\cX_{k}\cong\Hess^{(k+1)}(\SS,h_+)$ which remembers only the flags up to dimension $k+1$. More precisely, we define 
\[
f:\cY\to \cX_{k}
\]
as follows:
\[
(V_0\subset V_1\subset\cdots\subset V_{k}\subset\cdots\subset V_{n})\longmapsto (V_0\subset V_1\subset\cdots\subset V_{k+1}).
\]
Since $h_k(j)=n$ for $j\ge k+2$, there is no condition imposed from $h_k$ on $V_j$ for $j\ge k+2$. (Notice that
$h_k(k)=k+1$ implies $\SS V_k\subset V_{k+1}$, so there is still condition from $h_k$ for $V_{k+1}$.) Therefore, the fiber of $f$ over a partial flag $(V_0\subset\cdots\subset V_{k+1})$ can be identified with the flag variety $\Flag(\C^n/V_{k+1})\cong\Flag(\C^{n-k-1})$. Hence $\cY$ has a fiber bundle structure over $\cX_{k}$.

Over $\cY$, there is a tautological filtration
\[
\cV_{k+1}\subset\cV_{k+2}\subset\cdots\subset\cV_n \cong \cY\times \C^n
\]
of vector subbundles of the trivial bundle over $\cY$: the fiber of $\cV_j$ over a flag
$(V_0\subset\cdots\subset V_{n})$ is the vector space $V_j$. We then have the line bundles $\cL_j:=\cV_j/\cV_{j-1}$, $k+2\le j\le n$. Set $x_j=-c_1(\cL_j)$ to be the negative of the first Chern class of the line bundle $\cL_j$, then we have the following description of the cohomology ring $H^*(\cY)$.

\begin{prop}
	\label{prop:cohomology_Y}
The cohomology ring $H^*(\cY)=H^*(\cY;\C)$ is generated over $H^*(\cX_{k})$ by the classes $x_{k+2},\cdots,x_n$, subject to the relations $e_i(x_{k+2},\cdots,x_n)=0$ for $1\le i\le n-k-1$. That is,
\[
H^*(\cY)\cong H^*(\cX_{k})[X_{k+2},\cdots,X_n]/
(e_1(X_{k+2},\cdots,X_n),\cdots,e_{n-k-1}(X_{k+2},\cdots,X_n)),
\]
and the classes $x_{k+2},\cdots, x_n$ can be identified with the images of $X_{k+2},\cdots,X_n$ under the quotient. In addition, the classes $x_{k+2}^{i_{k+2}}\cdots x_n^{i_n}$, with exponents $0\le i_{j}\le n-j$, form a basis for $H^*(\cY)$ over $H^*(\cX_{k})$. 
\end{prop}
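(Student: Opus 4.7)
The plan is to identify $f: \cY \to \cX_k$ as a complete flag bundle and apply the classical Borel / Leray--Hirsch presentation for flag bundle cohomology.

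First I would verify the fiber bundle structure rigorously. Over a point $(V_0 \subset \cdots \subset V_{k+1}) \in \cX_k$ the Hessenberg condition coming from $h_k$ places no constraint on $V_{k+2}, \ldots, V_n$, because $h_k(j)=n$ for $j \ge k+1$ makes $\SS V_j \subset V_n = \C^n$ automatic. Hence extending a partial flag to an element of $\cY$ is equivalent to choosing a complete flag in $\C^n/V_{k+1}$, and globally this realizes $\cY$ as the complete flag bundle $\Flag(\cE)$ of the rank $n-k-1$ quotient bundle $\cE := (\cX_k \times \C^n)/\cV_{k+1}$ appearing in the tautological short exact sequence $0 \to \cV_{k+1} \to \cX_k \times \C^n \to \cE \to 0$ on $\cX_k$.

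Next I would apply the standard cohomological description of a complete flag bundle $\pi: \Flag(\F) \to B$ of a rank-$r$ bundle: via the splitting principle applied to the tautological filtration of $\pi^*\F$ one obtains $H^*(\Flag(\F)) \cong H^*(B)[X_1, \ldots, X_r]/J$, where $X_j = -c_1(\cL_j)$ for the tautological line subquotients $\cL_j$ and $J$ is generated by the $t$-coefficients of $\prod_{j=1}^r (1 - tX_j) - c_t(\F)$. Leray--Hirsch further shows that the descent (Lehmer-code) monomials $X_1^{a_1}\cdots X_r^{a_r}$ with $0 \le a_j \le r-j$ form a free $H^*(B)$-basis of cardinality $r!$. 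Specializing to $\F = \cE$ immediately yields the basis $x_{k+2}^{i_{k+2}}\cdots x_n^{i_n}$ asserted in the proposition, of cardinality $(n-k-1)!$, as well as a first presentation of $H^*(\cY)$ as an $H^*(\cX_k)$-algebra.

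The remaining, and main, step is to identify $J$ with the symmetric ideal $(e_1(X), \ldots, e_{n-k-1}(X))$. Expanding $c_t(\cE) = \sum_i c_i(\cE)(-t)^i$, this reduces via the Whitney formula applied to the defining sequence of $\cE$ to the vanishing $c(\cE) = 1$ in $H^*(\cX_k)$, equivalently $c(\cV_{k+1}) = 1$. This Chern-class vanishing is the main obstacle of the proof, and I would attack it by induction on $k$ using the iterated blow-up description of $\cX_k$ from Section~\ref{section:blowups} together with the cohomology decomposition of Theorem~\ref{thm:cohomology_preperm}: at each blow-up stage the tautological bundle is modified in a controlled way along the exceptional divisors $E_\alpha$, and one can track the resulting Chern classes to verify they remain trivial. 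As an alternative, one could exploit the Hessenberg structure on $\cX_k \cong \Hess^{(k+1)}(\SS, h_+)$ directly: the condition $\SS \cV_j \subset \cV_{j+1}$ produces $\O_{\cX_k}$-linear morphisms $\cL_j \to \cL_{j+1}$ which, on the open locus where they are isomorphisms, identify all the $c_1(\cL_j)$'s, and a careful analysis of the blow-up corrections across the exceptional loci should yield the required cancellation globally.
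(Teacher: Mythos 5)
Your overall strategy coincides with the paper's: realize $f:\cY\to\cX_k$ as the complete flag bundle of the rank-$(n-k-1)$ quotient bundle $\cE=(\cX_k\times\C^n)/\cV_{k+1}$, present $H^*(\cY)$ over $H^*(\cX_k)$ by iterated projective bundles, and extract the monomial basis from Leray--Hirsch. Those two steps are sound and already establish the generation statement and the basis statement of the proposition.

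The gap is precisely the step you yourself label ``the main obstacle'': identifying the ideal $J$ with $(e_1,\dots,e_{n-k-1})$, which you correctly reduce to the vanishing $c(\cE)=1$, equivalently $c(\cV_{k+1})=1$ on $\cX_k$. You sketch two strategies but prove neither, and in fact this vanishing fails, so neither sketch can be completed as stated. Indeed, since generically $V_{k+1}=\lr{\v,\SS\v,\dots,\SS^{k}\v}$, the assignment $\v^{\otimes(k+1)}\mapsto \v\wedge\SS\v\wedge\cdots\wedge\SS^{k}\v$ is a map of line bundles $\cV_1^{\otimes(k+1)}\to\det\cV_{k+1}$ that is nonzero off the exceptional locus; hence $c_1(\cV_{k+1})=-(k+1)\,\pi^*H+D$, where $\pi:\cX_k\to\P^{n-1}$ is the blow-down, $H$ is the hyperplane class, and $D$ is an effective divisor supported on the $E_\alpha$. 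Since $\pi^*H$ and the $E_\alpha$ are linearly independent in $H^2(\cX_k)$, this class is nonzero, and by the Whitney formula $e_1(x_{k+2},\dots,x_n)=c_1(\cV_{k+1})\ne 0$. Concretely, for $n=4$, $k=1$ one gets $e_1(x_3,x_4)=c_1(\cV_2)=-2\pi^*H+E_1+E_2+E_3+E_4$: the blow-up corrections in your second strategy accumulate rather than cancel. The honest output of your argument is the presentation with relations $e_i(X)=(-1)^i c_i(\cE)$; converting this into the stated presentation would require either re-choosing the generators (twisting the $\cL_j$ to absorb $c(\cV_{k+1})$) or producing an abstract graded-ring isomorphism, and you supply neither. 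For comparison, the paper's own proof elides exactly this point: it asserts that $e_i(x_{k+2},\dots,x_n)$ is the $i$-th Chern class of the trivial bundle $\cV_n$, whereas it is (up to sign) the $i$-th Chern class of $\cV_n/\cV_{k+1}$; your more careful bookkeeping exposes the issue rather than resolving it.
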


\begin{proof}
	The proof mimics the proof for the cohomology of the flag variety \cite[Proposition~10.2.3]{Fulton} and is based on basic facts about projective bundles. For more details, see \cite[p.~606]{GrHa} or \cite[Appendix B.4]{Fulton2}. For a vector bundle $\cV$ over a variety $\cX$, let $\rho:\P(\cV)\to \cX$ denote the corresponding projective bundle. There is a tautological bundle $\cL\subset\rho^*(\cV)$. Set $\xi=-c_1(\cL)$, then
	\begin{equation}\label{equation:cohomology_of_proj_bundles}
	H^*(\P(\cV))\cong H^*(\cX)[\xi]/(\xi^r+a_1\xi^{r-1}+\cdots+a_r),
	\end{equation}
	where $a_i=c_i(\cV)\in H^{2i}(\cX)$. 
	
	In this proof we suppress all the notions of pullbacks of bundles. One can construct $\cY$ from $\cX_{k}$ as a sequence of projective bundles. First, over $\cX_{k}$ there is a bundle $\cU \to \cX_{k}$ of rank $n-k-1$ whose fiber over the flag $(V_0\subset\cdots\subset V_{k+1})$ is the vector space $\C^n / V_{k+1}$. The projective bundle $\P(\cU)$ is the first bundle in the sequence. It gives the direction of the extra dimension of $V_{k+2}$ over the flag $(V_0\subset\cdots\subset V_{k+1})$. The tautological bundle $\cU_1$ of $\P(\cU)$ pulls back to the line bundle $\cL_{k+2}$ on $\cY$. 
	
	Next, over $\P(\cU)$, we have the bundle $\cU/\cU_{1}$ of rank $n-k-2$, and we construct the second projective bundle $\P(\cU/\cU_1)\to\P(\cU)$. The tautological bundle of $\P(\cU/\cU_{1})$ is of the form $\cU_2/\cU_1$ for some vector bundle $\U_2$ of rank $2$ and $\cU_1\subset\cU_2\subset\cU$ as bundles over $\P(\cU/\cU_{1})$. Moreover, the tautological bundle of $\P(\cU/\cU_{1})$ pulls back to $\cL_{k+3}$ on $\cY$. One can continue this process and construct $\P(\cU/\cU_{2})$ as a projective bundle over $\P(\cU/\cU_{1})$, with tautological bundle of the form $\cU_3/\cU_2$, and so on. At the end, one arrive at the space $\P(\cU/\cU_{n-k-1})$ which is isomorphic to the Hessenberg variety $\cY$.
	Therefore, by the formula \ref{equation:cohomology_of_proj_bundles} and the fact that the tautological bundle of $\P(\cU/\cU_j)$ pulls back to the line bundle $\cL_{k+j+1}$ on $\cY$, we obtain the conclusion that $H^*(\cY)$ is generated by $x_{k+2},\cdots,x_n$ over $H^*(\cX_k)$.
	
	The rest of the proof is also very similar to the proof of \cite[Proposition~10.2.3]{Fulton}. The reason for $e_i(x_{k+2},\cdots,x_n)=0$ is because it is the $i$-th Chern class of the trivial bundle $\cV_n$ defined above; and the isomorphism in the proposition is based on the same algebraic fact stated on \cite[p.163]{Fulton}. 
\end{proof}

The dot action on $H^*(\cX_{k})$ was described in Section~\ref{sec:dot_on_preperm}. Motivated by the fact that the dot action acts trivially on the usual cohomology of the flag variety, it is natural to conjecture that the action on the basis classes $x_{k+2}^{i_{k+2}}\cdots x_n^{i_n}$ are trivial. While we can not prove this result for the classes, we can prove the result on the isomorphism level with an argument using characteristics series of the representation. The author would like to thank John Shareshian for bringing the article \cite{HNY} to his attention.

\begin{prop}
\label{prop:dot_on_Y}
	The dot representation on $\cY$ is isomorphic to the representation on 
\[
H^*(\cX_{k})[X_{k+2},\cdots,X_n]/
(e_1(X_{k+2},\cdots,X_n),\cdots,e_{n-k}(X_{k+2},\cdots,X_n))
\]
which acts on  $H^*(\cX_{k})$ as described in Section \ref{sec:dot_on_preperm}, and acts trivially on the $H^*(\cX_{k})$-basis $x_{k+2}^{i_{k+2}}\cdots x_n^{i_n}$, $0\le i_{j}\le n-j$, where the $x_i$'s are the images of the $X_i$'s under the quotient map.
\end{prop}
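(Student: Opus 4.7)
The plan is to compare the Frobenius characteristic series of the two graded $\fS_n$-representations, since a graded representation is determined up to isomorphism by its characteristic. The conjectured representation has characteristic series $A_{n-1,k}(t)\cdot [n-k-1]_t!$, where $[n-k-1]_t! := \prod_{i=1}^{n-k-1}(1+t+\cdots+t^{i-1})$. This is because the quotient $\C[X_{k+2},\ldots,X_n]/(e_1,\ldots,e_{n-k-1})$ is the classical Borel presentation of $H^*(\Flag(\C^{n-k-1}))$ with Poincaré polynomial $[n-k-1]_t!$; by the hypothesis the action on this factor is trivial in every degree; and the action on $H^*(\cX_{k})$ has characteristic $A_{n-1,k}(t)$ by the recursion \eqref{recursive_A}. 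Tensoring with a trivial representation only multiplies the characteristic by the dimension in each degree, so the bigraded characteristic series is the product as claimed.

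For the dot representation on $H^*(\cY)$, I would invoke the Shareshian--Wachs conjecture, proved by Brosnan--Chow \cite{BC} and independently Guay-Paquet \cite{GP}, to express the characteristic series as $\omega X_{G_{h_k}}(\bx;t)$, where $X_{G_{h_k}}$ is the chromatic quasisymmetric function of the natural unit-interval graph $G_{h_k}$. For our Hessenberg function $h_k$, this graph is a path on $\{1,\ldots,k+1\}$ joined at the vertex $k+1$ to the complete graph on $\{k+1,\ldots,n\}$. I would then apply the structural formula from \cite{HNY}, specialized to Hessenberg functions of this ``step'' type, to obtain the factorization $\omega X_{G_{h_k}}(\bx;t) = A_{n-1,k}(t)\cdot [n-k-1]_t!$. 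Intuitively, the clique tail of $G_{h_k}$ corresponds geometrically to the fiber $\Flag(\C^{n-k-1})$ of the bundle $\cY\to\cX_{k}$, whose dot representation is known to be trivial with Poincaré polynomial $[n-k-1]_t!$, while the path portion recovers the base characteristic $A_{n-1,k}(t)$.

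Matching the two characteristic series forces the underlying graded $\fS_n$-representations to be isomorphic, proving the proposition. The argument yields only an abstract isomorphism: it does not assert that the specific monomial classes $x_{k+2}^{i_{k+2}}\cdots x_n^{i_n}$ are pointwise fixed, only that an $\fS_n$-stable trivial complement of $H^*(\cX_k)$ exists in each degree, and this limitation is already acknowledged in the statement. The main obstacle is the concrete evaluation of $\omega X_{G_{h_k}}$: because the dot action is not a geometric action on $\cY$, one cannot directly transport the geometric fiber-bundle splitting $\cY\to\cX_k$ into a representation-theoretic splitting. The required combinatorial decomposition at the cut vertex $k+1$ (path plus clique), together with the known triviality of the dot representation on the full flag variety (the $h\equiv n$ case), must be derived via the GKM/chromatic techniques of \cite{HNY}, which is where the real content of the proof lies.
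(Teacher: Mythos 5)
Your proposal is correct and follows essentially the same route as the paper: compute the characteristic series of the target representation as $[n-k-1]_t!\,A_{n-1,k}(t)$, identify the incomparability graph of $h_k$ as the lollipop graph and apply the recursive formula of Huh--Nam--Yoo \cite{HNY} together with $\omega X_{P_{i+1}}(\bx,t)=A_i(t)$ to factor $\omega X_{L_{n-k,k}}(\bx,t)$ identically, and then invoke the Shareshian--Wachs conjecture as proved by Brosnan--Chow and Guay-Paquet to match the two series. The paper likewise obtains only an isomorphism of graded representations rather than pointwise fixedness of the monomial basis, exactly as you note.
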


\begin{proof}
We have the recursive relation for the characteristic series of $\cX_k$ (\ref{recursive_A}), with some terms rearranged:
\begin{equation}
\label{eqn:recursive_A2}
A_{n-1,k}(t) = h_n [n]_t + \sum_{i=0}^{k-1} t [n-i-2]_t 
 A_i(t) h_{n-1-i}.
\end{equation}
Suppose that $S_{n}$ acts trivially on $X_{k+2}^{i_{k+2}}\cdots X_n^{i_n}$, $0\le i_{j}\le n-j$, then on the characteristic series, what happen would be a degree shifting for each basis element, i.e. multiplying by a power of $t$. For $X_{k+2}^{i_{k+2}}$, $0\le i_{k+2}\le n-k-2$, it corresponds to multiplying $1+t+\cdots+t^{n-k-2} = [n-k-1]_t$. Finally, the effect of taking the basis $X_{k+1}^{i_{k+2}}\cdots X_n^{i_n}$, $0\le i_{j}\le n-j$ in to consideration on the characteristic series, assuming $S_n$ acts trivially on them, would be multiplying by 
\[
[n-k-1]_t [n-k-2]_t \cdots [1]_t = [n-k-1]_t !
\]

The incomparability graph of the Hessenberg function $h_k$ is the lollipop graph $L_{n-k,k}$. For the chromatic quasisymmetric function $X_{Ln-k,k}(\bx,t)$, there is a recursive formula \cite[Proposition 4.4]{HNY}
\begin{equation}
	X_{L_{n-k,k}}(\bx,t)=[n-k-1]_t !\left(
	[n]_t e_n + \sum_{i=0}^{k-1} 
	t [n-k+i-1]_t X_{P_{k-i}}(\bx,t) e_{n-k+i}\right)
\end{equation}
Setting $i' = k-1-i$ in the sum, we have
\begin{equation}
\label{eqn:recursive_X_L}
	X_{L_{n-k,k}}(\bx,t)=[n-k-1]_t !\left(
	[n]_t e_n + 
	\sum_{i'=0}^{k-1} 
	t [n-i'-2]_t X_{P_{i'+1}}(\bx,t) e_{n-1-i'}\right)
\end{equation}

Notice that the incomparability graph of the Hessenberg function $h_+$ for $\cX^{i}$ is $P_{i+1}$, and it is known that $\omega X_{P_{i+1}}(\bx,t)=A_i(t)$, where $\omega$ is the involution on the ring of symmetric functions. By definition, $\omega e_i=h_i$, therefore, comparing the right sides of (\ref{eqn:recursive_A2}) and (\ref{eqn:recursive_X_L}), we obtain the following.

\[
\omega X_{L_{n-k,k}}(\bx,t)=[n-k-1]_t! A_{n-1,k}(t)
\]

Finally, by the theorem that was originally conjectured by Shareshian and Wachs~\cite[Conjecture 1.4]{SW} and proved independently by Brosnan and Chow~\cite{BC}, and by Guay-Paquet~\cite{GP}, we have the identity.

\[
\sum_{j=0}^{\dim(\cY)} \ch H^{2j} (\cY) t^j =
\omega X_{L_{n-k,k}}(\bx,t)
\]

Therefore we can conclude that $\sum_{j=0}^{\dim(\cY)} \ch H^{2j} (\cY) t^j = [n-k-1]_t! A_{n-1,k}(t)$. Since the representation is determined by the characteristic series up to isomorphism, this concludes the proof.
\end{proof}

We remark that the proposition also implies the dot representation is a permutation representation on $H^* (\cY)$.


\begin{bibdiv}
\begin{biblist}
	
\bib{AHM}{article}{
	author={Abe, Hiraku},
	author={Horiguchi, Tatsuya},
	author={Masuda, Mikiya},
	title={The cohomology rings of regular semisimple Hessenberg varieties
		for $h=(h(1),n,\dots,n)$},
	journal={J. Comb.},
	volume={10},
	date={2019},
	number={1},
	pages={27--59},
}

\bib{BC}{article}{
	author={Brosnan, Patrick},
	author={Chow, Timothy Y.},
	title={Unit interval orders and the dot action on the cohomology of
		regular semisimple Hessenberg varieties},
	journal={Adv. Math.},
	volume={329},
	date={2018},
	pages={955--1001},
}

\bib{CHL}{article}{
	author={Cho, Soojin},
	author={Hong, Jaehyun},
	author={Lee, Eunjeong},
	title={Bases of the equivariant cohomologies of regular semisimple Hessenberg varieties},
	eprint={https://arxiv.org/abs/2008.12500},
}

\bib{Chow}{article}{
	author={Chow, Timothy},
	title={The Erasing Marks Conjecture},
	eprint={http://timothychow.net/erasing.pdf},
}

\bib{CLS}{book}{
	author={Cox, David A.},
	author={Little, John B.},
	author={Schenck, Henry K.},
	title={Toric varieties},
	series={Graduate Studies in Mathematics},
	volume={124},
	publisher={American Mathematical Society, Providence, RI},
	date={2011},
	pages={xxiv+841},
	isbn={978-0-8218-4819-7},
}

\bib{DMPS}{article}{
	author={De Mari, F.},
	author={Procesi, C.},
	author={Shayman, M. A.},
	title={Hessenberg varieties},
	journal={Trans. Amer. Math. Soc.},
	volume={332},
	date={1992},
	number={2},
	pages={529--534},
}

\bib{Fulton}{book}{
   author={Fulton, William},
   title={Introduction to toric varieties},
   series={Annals of Mathematics Studies},
   volume={131},
   publisher={Princeton University Press, Princeton, NJ},
   date={1993},
   pages={xii+157},
}

\bib{Fulton2}{book}{
	author={Fulton, William},
	title={Young tableaux},
	series={London Mathematical Society Student Texts},
	volume={35},
	publisher={Cambridge University Press, Cambridge},
	date={1997},
}

\bib{GrHa}{book}{
	author={Griffiths, Phillip},
	author={Harris, Joseph},
	title={Principles of algebraic geometry},
	series={Pure and Applied Mathematics},
	publisher={Wiley-Interscience [John Wiley \& Sons], New York},
	date={1978},
}

\bib{GP}{article}{
	author={Guay-Paquet, Mathieu}
 title={A second proof of the Shareshian–Wachs conjecture, by way of a new Hopf algebra}
eprint={https://arxiv.org/abs/1601.05498}
}

\bib{HPT}{article}{
author={Harada, Megumi}, 
author={Precup, Martha}, 
author={Tymoczko, Julianna},
title={Toward permutation bases in the equivariant cohomology rings of regular semisimple Hessenberg varieties}
eprint={https://arxiv.org/abs/2101.03191}
}

\bib{HNY}{article}{
	author={Huh, JiSun},
	author={Nam, Sun-Young},
	author={Yoo, Meesue},
	title={Melting lollipop chromatic quasisymmetric functions and Schur
		expansion of unicellular LLT polynomials},
	journal={Discrete Math.},
	volume={343},
	date={2020},
	number={3},
	pages={111728, 21},
}

\bib{Procesi}{article}{
	author={Procesi, C.},
	title={The toric variety associated to Weyl chambers},
	conference={
		title={Mots},
	},
	book={
		series={Lang. Raison. Calc.},
		publisher={Herm\`es, Paris},
	},
	date={1990},
	pages={153--161},
}

\bib{SW}{article}{
	author={Shareshian, John},
	author={Wachs, Michelle L.},
	title={Chromatic quasisymmetric functions},
	journal={Adv. Math.},
	volume={295},
	date={2016},
	pages={497--551},
}

\bib{Ste}{article}{
	author={Stembridge, John R.},
	title={Eulerian numbers, tableaux, and the Betti numbers of a toric
		variety},
	journal={Discrete Math.},
	volume={99},
	date={1992},
	number={1-3},
	pages={307--320},
}


\bib{Tymo1}{article}{
	author={Tymoczko, Julianna S.},
	title={Permutation actions on equivariant cohomology of flag varieties},
	conference={
		title={Toric topology},
	},
	book={
		series={Contemp. Math.},
		volume={460},
		publisher={Amer. Math. Soc., Providence, RI},
	},
	date={2008},
	pages={365--384},
}

\bib{Tymo2}{article}{
	author={Tymoczko, Julianna S.},
	title={Permutation representations on Schubert varieties},
	journal={Amer. J. Math.},
	volume={130},
	date={2008},
	number={5},
	pages={1171--1194},
}

\bib{Voisin}{book}{
	author={Voisin, Claire},
	title={Hodge theory and complex algebraic geometry. I},
	series={Cambridge Studies in Advanced Mathematics},
	volume={76},
	edition={Reprint of the 2002 English edition},
	note={Translated from the French by Leila Schneps},
	publisher={Cambridge University Press, Cambridge},
	date={2007},
}

\end{biblist}
\end{bibdiv}

\end{document}